\documentclass{amsart}
\usepackage{amsfonts}
\usepackage{amsmath,amssymb}	
\usepackage{amsthm}
\usepackage{amscd}
\usepackage{graphics}
\usepackage{graphicx}
{
\theoremstyle{definition}
\newtheorem{Def}{Definition}

\newtheorem{Rem}{Remark}
\newtheorem*{Prob}{Problem}
}

\newtheorem{Thm}{Theorem}

\begin{document}
\title[Reeb graphs of smooth functions on closed or open manifolds]{On Reeb graphs induced from smooth functions on closed or open manifolds}
\dedicatory{Dedicated to Masaaki Umehara and Kotaro Yamada on the occation of their 60-th birthdays}
\author{Naoki Kitazawa}
\keywords{Singularities of differentiable maps. Differential topology. Reeb graphs. Reeb spaces. \\
\indent {\it \textup{2020} Mathematics Subject Classification}: Primary~57R45, 58C05. Secondary~57R19.
}
\address{Institute of Mathematics for Industry, Kyushu University, 744 Motooka, Nishi-ku Fukuoka 819-0395, Japan}
\email{n-kitazawa@imi.kyushu-u.ac.jp}
\urladdr{https://naokikitazawa.github.io/NaokiKitazawa.html}\maketitle
\begin{abstract}
For a smooth function of a suitable class, the space of all connected components of preimages is the graph and called the {\it Reeb graph}. Reeb graphs are fundamental tools in studying algebraic topological properties and differential topological ones for Morse functions and more general functions which are not so wild. They are strong tools not only in geometry, but also in applications of mathematics such as visualizations. 

In the present paper, we study whether we can construct a smooth function with good geometric properties inducing a given graph as the Reeb graph and having prescribed preimages. This paper concentrates on smooth functions on surfaces and manifolds which may be non-closed with no boundary as a pioneering case and give answers with new ideas. This problem was essentially launched by Sharko in 2000s and various answers on functions on closed manifolds have been given by others and the author.
\end{abstract}

\section{Introduction and some terminologies and notation.}
\label{sec:1}

The {\it Reeb graph} or a {\it Kronrod-Reeb graph} of a smooth function of a suitable class on a smooth manifold is the graph obtained as the space of all connected components of preimages such that the vertex set coincides with the set of all connected components of preimages containing {\it some singular points} (see \cite{reeb} and \cite{sharko} for example). We review fundamental notions on graphs
later and after that we introduce the {\it Reeb space} and the {\it Reeb graph} in Definition \ref{def:1} rigorously.

A {\it singular} point of a smooth map is a point at which the rank of the differential is smaller than both the dimensions of the manifolds of the domain and the target. A {\it regular value} is a point in the manifold of the target whose preimage has no singular points.

For so-called {\it Morse} functions, functions with finitely many singular points on closed manifolds and functions of several suitable classes, we have Reeb graphs (\cite{saeki2021}). 

Reeb graphs are fundamental and important tools in algebraic topological theory and differential topological theory of Morse functions and their generalizations. They play important roles in geometry of manifolds. They are also strong tools in applications of mathematics such as visualizations recently.

In the present paper, we attack the following fundamental, natural and important problem.

\begin{Prob}
Can we construct a smooth function with good geometric properties inducing a given graph as the Reeb graph? Especially, can we have a function with mild singularities and prescribed preimages?
\end{Prob}

This problem has been first considered by Sharko (\cite{sharko}). Related to this pioneering work, several works have been done: the work of J. Martinez-Alfaro, I. S. Meza-Sarmiento and R. Oliveira (\cite{martinezalfaromezasarmientooliveira}), the work of Masumoto and Saeki (\cite{masumotosaeki}) and recent works such as \cite{batistacostamezasarmiento}, \cite{michalak}, \cite{michalak2} and \cite{saeki2021}. In these studies, explicit smooth functions inducing the given graphs as the Reeb graphs have been constructed. Most of the functions are ones on closed surfaces or Morse functions on closed manifolds such that preimages of regular values are disjoint unions of circles or standard spheres. In other words, preimages had been essentially assumed to be disjoint unions of spheres before the author obtained a related result of a new type in \cite{kitazawa2}, answering Problem as a pioneering work respecting preimages. \cite{kitazawa} is also closely related. 

We introduce several terminologies and notation before introducing the previous result.
Hereafter, ${\mathbb{R}}^k$ denotes the $k$-dimensional Euclidean space with the standard Euclidean metric and $||x||\geq 0$ denotes the distance between $x \in {\mathbb{R}}^k$ and the origin $0$ or equivalently, the value of the standard Euclidean norm at the vector $x \in {\mathbb{R}}^n$. This is a smooth manifold and also a Riemannian manifold where the metric is the standard Euclidean metric. $\mathbb{R}$ is for ${\mathbb{R}}^k=\mathbb{R}$.
The $k$-dimensional unit sphere is denoted by $S^k:=\{x \in {\mathbb{R}}^{k+1}\mid ||x||=1\}$ and the $k$-dimensional unit disk is denoted by $D^k:=\{x \in {\mathbb{R}}^{k}\mid ||x|| \leq 1\}$. They are smooth manifolds. We also call manifolds diffeomorphic to them a $k$-dimensional {\it standard sphere} and a $k$-dimensional {\it standard disk}, respectively.
Note also for example that $S^0$ is also a two-point set endowed with the discrete topology.

We introduce {\it Reeb spaces} and {\it Reeb graphs} of functions and maps.

Before this, we introduce several terminologies and notions on graphs. 
A {\it graph} $G$ is a $1$-dimensional simplicial complex consisting of $1$-cells and $0$-cells. The set of all $1$-cells is {\it the edge set} of the graph. Each 1-cell is called an {\it edge}. The set of all $0$-cells is {\it the vertex set} of the graph. Each 0-cell is called a {\it vertex}. 
 
Two graphs are {\it isomorphic} if there exists a homeomorphism mapping the vertex set of a graph onto the vertex set of another graph and this homeomorphism is called an {\it isomorphism} between the graphs where the graphs are topologized in canonical ways. 

A graph is {\it connected} if it is connected as a topological space endowed with the canonical topology. A {\it loop} is an edge which is homeomorphic to a circle in the graph. In the present paper, we consider connected graphs with no loops essentially.

\begin{Def}
\label{def:1}
Let a smooth manifold $X$ and a function $c:X \rightarrow \mathbb{R}$ be given. Let ${\sim}_c$ be the equivalence relation on $X$ defined by the following rule: $x_1 {\sim}_c x_2$ holds if and only if they are in a same connected component of a preimage $c^{-1}(y)$. We call the quotient space $W_c:=X/{\sim}_c$ the {\it Reeb space} of $c$. Let $V \subset W_c$ be the set of all points representing connected components of preimages of $c$ which contain some singular points of $c$. 

If $W_c$ is regarded as the graph whose vertex set is $V$, then the graph $W_c$ is called the {\it Reeb graph} of $c$.
\end{Def}

In the definition, $q_c:X \rightarrow W_c$ denotes the natural quotient map and $\bar{c}$ denotes the uniquey defined map satisfying the relation $c=\bar{c} \circ q_c$.
For a connected graph without loops which is not a single point or empty, a {\it good function} is a continuous real-valued function on the graph such that the restriction to each edge is injective.

We can give an orientation to each edge compatible with the good function so that the following hold. The {\it starting point} of an edge is a vertex from which the edge originates. The {\it ending point} of an edge is a vertex in which the edge terminates.

\begin{enumerate}
\item The starting point of an edge is the vertex at which the (restriction of the) given good function has the minimum.
\item The ending point of an edge is the vertex at which the (restriction of the) given good function has the maximum.
\end{enumerate}   

The {\it singular set} of a smooth map is defined as the set of all singular points of the map. A {\it singular value} of the map is a point in the manifold of the target such that the preimage contains a singular point. The {\it singular value set} of the map is the image of the singular set. The {\it regular value set} of the map is the complementary set of the singular value set of the map and a regular value of the map, defined before, can be also defined as a point in the regular value set.

In the present paper, {\it Morse} functions and their generalizations are fundamental tools. We omit precise expositions on Morse functions and ({\it $k$-}){\it handles} corresponding to singular points where $k$ is a non-negative integer being smaller than or equal to the dimension of the manifold we consider. We also note here that the singular sets of Morse functions are discrete. For related theory, see \cite{milnor} for example.

A {\it Morse-Bott} function is a function at each singular point which is represented as the composition of a submersion with a Morse function (\cite{bott}). A {\it fold} map is a smooth map such that at each singular point, the map is represented as the product map of a Morse function and the identity map on an open ball: the singular set is a smooth closed submanifold of the domain with no boundary and the restriction to the singular set of the original map is a smooth immersion whose codimension is $1$.

More rigorously, a smooth map $f$ is represented as another smooth map $g$ means that $f$ is {\it $C^{\infty}$ equivalent} to $g$: there exists a pair of a diffeomorphism $\Phi$ between the manifolds of the domains and a diffeomorphism $\phi$ between the targets satisfying the relation $\phi \circ f=g \circ \Phi$.

For related theory and more general theory on singularities of smooth maps, see also \cite{golubitskyguillemin} for example. \cite{saeki1993} is for the class of {\it special generic} maps, which are fold maps regarded as higher dimensional versions of Morse functions with exactly two singular points on spheres. Special generic maps are also used in section \ref{sec:3}.

The following theorem is the main theorem of \cite{kitazawa2}.

\begin{Thm}[\cite{kitazawa2}]
\label{thm:1}
Let $G$ be a connected graph having at least one edge. Assume that a good function $h:G \rightarrow \mathbb{R}$ is given. Assume also that a non-negative integer is assigned to each edge by a map $l$. Then we have a smooth function $f$ of a suitable $3$-dimensional closed, connected and orientable manifold $M$ satisfying the following properties.
\begin{enumerate}
\item The Reeb graph $W_f$ is isomorphic to the given graph $G$ and an isomorphism between the graphs $\phi:W_f \rightarrow G$ exists.
\item If we consider the natural quotient map $q_f$ onto the Reeb graph $W_f$ and a point $p \in W_f$ such that $\phi(p)$ is in the interior of an edge $e$ of $G$ satisfying $l(e)=q \geq 0$, then the preimage ${q_f}^{-1}(p)$ is a closed, connected and orientable surface of genus $q \geq 0$.
\item For a point $p \in M$ mapped by the quotient map $q_f$ to a vertex $v$, $f(p)=h(\phi(v))$.
\item At each singular point of $f$, the function is a Morse function, a Morse-Bott function or represented as the composition of two fold maps.
\end{enumerate}
\end{Thm}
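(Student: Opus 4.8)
The plan is to build $(M,f)$ by assembling standard pieces placed over the edges and over small neighbourhoods of the vertices of $G$. Orient each edge so that $h$ increases along it, and for $q\ge 0$ fix a closed, connected, orientable surface $\Sigma_q$ of genus $q$. Choose $\varepsilon_v>0$ at each vertex $v$, small enough that $\varepsilon_u+\varepsilon_w<h(w)-h(u)$ for every edge from $u$ to $w$. Over such an edge $e$, carrying $q_e$, put the product piece $M_e:=\Sigma_{q_e}\times[h(u)+\varepsilon_u,\,h(w)-\varepsilon_w]$ with $f|_{M_e}$ the second projection. Everything then reduces to a local statement at each vertex: for a vertex $v$ of degree $d$ whose adjacent edges carry genera $q_1,\dots,q_d$ (those of edges ending at $v$ counted ``from below'', those of edges starting at $v$ ``from above''), there is a compact, connected, orientable $3$-manifold $N_v$ with $\partial N_v=\bigsqcup_{i=1}^{d}\Sigma_{q_i}$ and a smooth $f_v\colon N_v\to[h(v)-\varepsilon_v,h(v)+\varepsilon_v]$ which is the evident product projection near each boundary component (a ``from below'' $\Sigma_{q_i}$ over the level $h(v)-\varepsilon_v$, a ``from above'' one over $h(v)+\varepsilon_v$), whose singular set lies in the single, connected fibre $f_v^{-1}(h(v))$, and all of whose singular points are of the three types allowed in $(4)$. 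Granting these, glue all the $M_e$ and $N_v$ along the matching genus-$q_e$ boundary surfaces --- consistently, as $q_e$ is constant along $e$ --- with gluing diffeomorphisms chosen to make the orientations fit (possible since each $\Sigma_{q_e}$ has an orientation-reversing self-diffeomorphism). The outcome is a closed, connected, orientable $3$-manifold $M$ with a smooth function $f$: each $M_e$ contributes an open edge of $W_f$ along which every fibre is the connected surface $\Sigma_{q_e}$, and each $N_v$ contributes exactly the vertex $v$, where $f$ equals $h(v)$; hence $(1)$, $(2)$, $(3)$ hold, and $(4)$ is inherited from the $f_v$.

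The whole weight of the proof therefore falls on constructing the local functions $f_v$, and the organizing idea is to write each as a composition of two fold maps, $f_v=g_v\circ\pi_v$. Here $g_v\colon P_v\to[h(v)-\varepsilon_v,h(v)+\varepsilon_v]$ is a Morse function on a compact surface $P_v$, all of whose critical points lie on the level $h(v)$ and whose Reeb picture is the star on the $d$ half-edges of $G$ at $v$ with $g_v$ essentially a trivial projection over each of them; producing such a $g_v$ is a mild variant of the known realisations of graphs by Morse functions with spherical fibres (see \cite{masumotosaeki}, \cite{michalak}, \cite{saeki2}). The fold map $\pi_v\colon N_v\to P_v$ is then drawn so that all of its fold arcs lie inside the central level set $g_v^{-1}(h(v))$, so that its restriction over the part of $P_v$ belonging to the $i$-th half-edge realises $\Sigma_{q_i}$ as the fibre of $f_v$ there, and so that $f_v^{-1}(h(v))=\pi_v^{-1}\bigl(g_v^{-1}(h(v))\bigr)$ is connected; the prescribed genus changes at $v$ come from level sets of $g_v$ sweeping across indefinite fold arcs of $\pi_v$, and the births and deaths of spherical components forced by the star picture from sweeping across definite fold arcs. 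With the fold arcs confined to $g_v^{-1}(h(v))$ the singular set of $f_v$ is automatically contained in $f_v^{-1}(h(v))$, and a short local computation shows that each singular point of $f_v$ is Morse or Morse--Bott, except at the fold points of $\pi_v$ sitting over critical points of $g_v$, where $f_v$ is by construction represented as a composition of two fold maps and is in general neither. This last case is precisely what makes the third alternative in $(4)$ indispensable.

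The degree-one vertices (leaves) deserve a word: there $N_v$ has a single boundary component $\Sigma_q$, so the leaf models are what close $M$ off. For $q=0$ one takes the standard cap, with a single nondegenerate local extremum and $S^2$ fibres (this is where a genuine Morse singularity occurs); for $q=1$ one can take $N_v=D^2\times S^1$ with $f_v$ an affine reparametrisation of $(x,\theta)\mapsto\|x\|^2$, which is Morse--Bott with critical circle $\{0\}\times S^1=f_v^{-1}(h(v))$ and torus fibres; and for $q\ge 2$ one again uses a composition of two fold maps $N_v\to D^2\to[h(v)-\varepsilon_v,h(v)+\varepsilon_v]$, whose fold arcs, placed over the interior critical point and the central level set of a radial Morse function on $D^2$, build the genus-$q$ boundary while keeping the critical fibre connected. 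The main obstacle is plainly the middle step: producing, uniformly over all vertices, a planar configuration of fold arcs for $\pi_v$ that realises the prescribed genera together with the local combinatorics of $G$, keeps the whole singular set on one connected fibre, and fits with the values imposed by $h$. Once the vertex models are available, the rest --- the gluing, the reparametrisation of $f$ so that $f(p)=h(v)$ at vertices, and the bookkeeping with orientations --- is routine.
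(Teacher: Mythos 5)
Your global skeleton --- trivial $\Sigma_{q_e}$-bundles over the interiors of the edges, a local model $f_v$ at each vertex with a single singular value and connected critical fibre, then gluing by orientation-compatible diffeomorphisms --- is exactly the architecture behind the cited result (the paper does not reprove Theorem \ref{thm:1}, which is quoted from \cite{kitazawa2}, but the same three-step scheme is visible in the proofs of Theorems \ref{thm:2} and \ref{thm:4}). The difficulty is that the whole content of the theorem lies in the vertex models, and your prescription for them is not only deferred (``the main obstacle'') but, as stated, internally inconsistent. You require all fold arcs of $\pi_v\colon N_v\to P_v$ to lie inside the central level set $g_v^{-1}(h(v))$, and at the same time you want the genus changes to come from ``level sets of $g_v$ sweeping across indefinite fold arcs.'' These cannot both hold: if the fold image is contained in $g_v^{-1}(h(v))$, then every level curve $g_v^{-1}(t)$ with $t\neq h(v)$ misses it, so over the interior of each half-edge $\pi_v$ restricts to a proper submersion with closed $1$-dimensional fibres over a circle; by Ehresmann's theorem each component of $f_v^{-1}(t)=\pi_v^{-1}(g_v^{-1}(t))$ is then a circle bundle over a circle, i.e.\ a torus (a Klein bottle being excluded in the orientable setting). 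Hence the boundary fibres $\Sigma_{q_i}$ are unobtainable unless every $q_i=1$, and in particular even the spherical fibres needed for $q_i=0$ cannot appear. The same objection hits your leaf model for $q\ge 2$: fold circles ``placed over \dots the central level set of a radial Morse function on $D^2$'' sit over level circles of $\|x\|^2$, forcing torus fibres away from them, while folds over any other level circle create an extra singular value and hence an extra vertex of the Reeb graph, destroying the isomorphism with $G$.

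What is actually needed --- and what the constructions in \cite{kitazawa2} and in this paper's proofs of Theorems \ref{thm:2} and \ref{thm:4} do --- is the opposite configuration: the fold image must run transversally across the level foliation (so that preimages of level curves can be the prescribed surfaces), and singular points of the composed function arise only at tangencies of the fold image with the levels, all of which are placed on the single level $h(v)$; this is the role of the parabola-shaped and horizontal pieces of the singular value sets of the maps $F$, $\tilde F$ composed with the projection $p(x,y)=y$. Moreover, at vertices where $h$ has no local extremum one can dispense with the intermediate surface $P_v$ altogether and build the local model directly as a Morse function with exactly one singular value by simultaneous handle attachments ($1$- and $2$-handles in the $3$-dimensional case), reserving the composition trick for local extrema, where Morse-Bott points and compositions of two fold maps genuinely occur. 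Since the configuration you do specify fails on the fibre count and the corrected configuration is precisely the part you leave unconstructed, the key step of the proof is missing.
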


A main theorem is the following.

\begin{Thm}[A main theorem]
\label{thm:2}
Let $G$ be a connected graph having at least one edge. Assume that a good function $h:G \rightarrow \mathbb{R}$ is given. Assume also that an integer $0$ or $1$ is assigned to each edge by a map $l$ satisfying the following conditions.
\begin{itemize}
\item For a vertex at which the good function $h$ does not have a local extremum, the number of edges at which the values of $l$ are $1$ containing the vertex as the starting points and that of edges at which the values of $l$ are $1$ containing the vertex as the ending points agree. Moreover, if the number of edges at which the values are $1$ having the vertex as the starting points is $1$, then there exists at least one edge at which the value of $l$ is $0$ containing the vertex as the starting point or ending point.
\item For a vertex at which the good function $h$ has a local extremum, the number of edges at which the values of $l$ are $1$ containing the vertex is even. 
\end{itemize}
Then, there exist a connected and orientable surface $M$ and a smooth function $f:M \rightarrow \mathbb{R}$ satisfying the following properties.
\begin{enumerate}    
\item The Reeb graph $W_f$ is isomorphic to $G$ and an isomorphism $\phi:W_f \rightarrow G$ exists.
\item If we consider the natural quotient map onto the Reeb graph $W_f$ and a point $p \in W_f$ such that $\phi(p)$ is in the interior of an edge $e$ of $G$ satisfying $l(e)=q$, then the preimage is diffeomorphic to a circle {\rm (}$q=0${\rm )} or a line {\rm (}$q=1${\rm )}.
\item For a point $p \in M$ mapped by the quotient map $q_f$ to a vertex $v$, $f(p)=h(\phi(v))$.   
\item Around each singular point at which the function $f$ does not have a local extremum, the local function is a Morse function.
\item Around each singular at which $f$ has a local extremum, the local function is a Morse function, a Morse-Bott function, or represented as the composition of two Morse functions.
\end{enumerate}
\end{Thm}

This is for smooth functions on closed or open surfaces and similar to Theorem \ref{thm:1} in considerable parts. One of the new ingredients is that the resulting function may have open preimages of regular values. We prove this and a higher dimensional version (Theorem \ref{thm:3}) in the next section. In the last two sections we prove further results related to these theorems.   


\section{Proofs of Theorems \ref{thm:2} and \ref{thm:3}.}
\label{sec:2}
The outline of the proof is similar to that of Theorem \ref{thm:1}. We prove this.
\begin{proof}[Proof of Theorem \ref{thm:2}]
\noindent STEP 1 Construction around a vertex at which the good function $h$ does not have a local extremum. \\
Let $a$ be the number of edges at which the values of $l$ are $1$ containing the vertex as the starting (ending) points (these numbers agree by the assumption on the numbers assigned to edges). Let $b$ and $c$ be the numbers of edges at which the values of $l$ are $0$ containing the vertex as the ending points and the starting points, respectively.
We construct a desired function as a Morse function. We apply well-known fundamental correspondence of the handles in a family of handles attached to the boundary of a manifold and singular points of a Morse function on the manifold produced by the handles. We abuse several terminologies omitting the definitions such as the {\it index} $k$ of a handle or a {\it $k$-handle}. Expositions on related theory are in \cite{milnor} for example.

Let $a$, $b$ and $c$ defined before be arbitrary non-negative integers satisfying $$(a,b,c) \neq (1,0,0),(0,1,0),(0,0,1),(0,0,0).$$. 

This accounts for all cases for the present step.

FIGURE \ref{fig:1} shows handle attachments producing a surface whose boundary is the disjoint union of two $1$-dimensional manifolds which may not be connected. We attach $a-1+b+c$ $1$-handles to the product of a $1$-dimensional manifold $F$, consisting of $a$ copies of a line and $b$ copies of a circle, and the closed interval $[0,1]$. More precisely, we attach them to $F \times \{0\} \subset F \times [0,1]$. If $a>0$, then we attach handles as the following explanation.
\begin{itemize}
\item We attach $a-1$ $1$-handles to connect $a$ copies of a line.
\item We attach $b-1$ $1$-handles to connect $b$ copies of a circle if $b>0$. We attach a $1$-handle to a line to connect the $b$ copies of a circle.
\item We attach $c$ $1$-handles to a line. We may attach $c$ $1$-handles to a circle if it exists, not a line.
\end{itemize}
If $a=0$ and $(b,c) \neq (1,1)$, then we attach $1$-handles according to the following explanation. 
\begin{itemize}
\item We attach $b-1$ $1$-handles to connect $b$ copies of a circle ($b>0$).
\item We attach $c-1$ $1$-handles to one of the $b$ circles ($c>0$).
\end{itemize}
If $a=0$ and $(b,c)=(1,1)$, then we attach $1$-handles according to the following explanation.
\begin{itemize}
\item We choose four disjoint closed intervals in the circle $F$\ ($F \times \{0\}$).
\item We choose two of the four closed intervals which are not adjacent.
\item We attach a $1$-handle to (the disjoint union of) the two closed intervals.
\item We attach another $1$-handle to (the disjoint union of) the remaining two closed intervals.
\end{itemize} 

As a result, we obtain a $2$-dimensional connected and orientable manifold whose boundary is the disjoint union of two $1$-dimensional manifolds. One of the $1$-dimensional manifolds is the $1$-dimensional manifold $F \times \{1\}$, consisting of $a$ lines and $b$ circles. The other is the disjoint union of $a$ lines and $c$ circles.

We obtain a desired local Morse function satisfying the first four properties around the vertex for any $(a,b,c)$ and having exactly one singular point, whose preimage is connected. For the first property, this means the following two.
\begin{itemize}
\item A small regular neighborhood of the vertex in the given graph and the Reeb graph is PL homeomorphic.
\item There exists a PL homeomorphism mapping a vertex onto the point whose preimage (for the quotient map) contains some singular points.
\end{itemize}
In the present paper, situations like this appear in the remaining theorems and proofs and hereafter we omit precise expositions on this. Note that handles can be attached simultaneously to disjoint circles or lines disjointly and that this gives a Morse function on the $2$-dimensional manifold having exactly one singular value in the image. The preimage of the singular value is a connected 1-dimensional polyhedron obtained by identifying finitely many pairs of points of the disjoint union of the $a$ copies of a line and the $b+c$ copies of a circle.

\begin{figure}
\includegraphics[width=30mm]{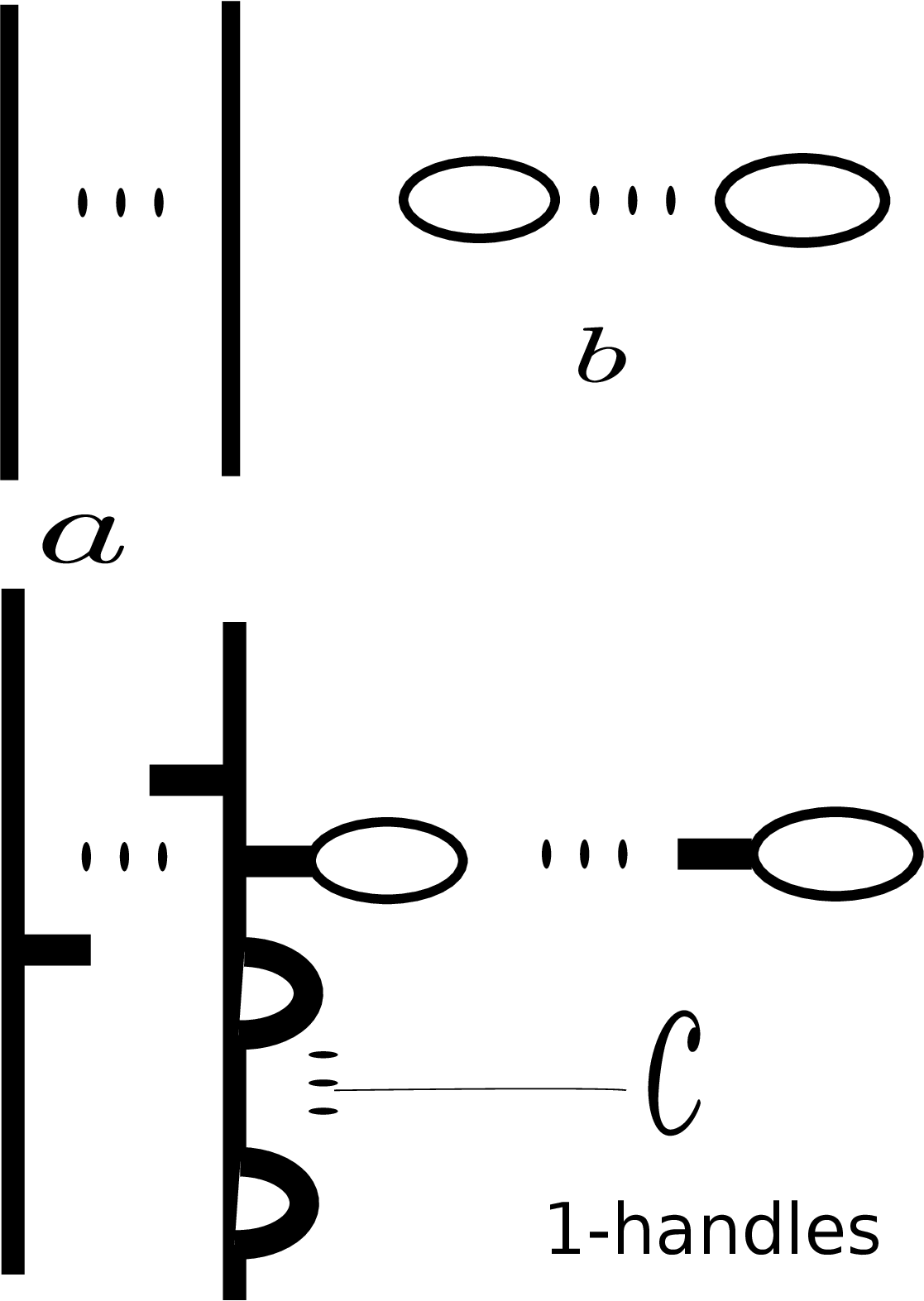}
\caption{Handle attachments to obtain a connected and orientable surface and a desired local Morse function on this in STEP 1 for $a>0$: attaching $a-1$ $1$-handles to connect $a$ copies of a line, $b$ $1$-handles to connect $b$ copies of a circle and a line and $c$ $1$-handles to a line.}
\label{fig:1}
\end{figure} 
\ \\
\noindent STEP 2 Construction around a vertex at which the good function $h$ has a local extremum. \\
Case 1 The case where the vertex is of degree $1$. \\
It is sufficient to consider the case where for the edge $e$ containing the vertex $l(e)=0$ holds by the assumption. A natural {\it height function} of a $2$-dimensional standard disk, which is a Morse function with exactly one singular point in the interior or in the center, is a desired local function. Note that this is generalized to arbitrary dimensional cases and this is a key in proving Theorem \ref{thm:3} later and in various situations of the present paper. A {\it height function} of the unit disk $D^k$ or an $k$-dimensional standard disk is a {\rm (}Morse{\rm )} function represented by the form $x \in D^k \mapsto ||x||^2+c_0$ for a constant $c_0 \in \mathbb{R}$ and natural coordinates. \\
\ \\
\noindent Case 2 The case where the vertex is of degree greater than $1$. \\
We show the case where at the vertex the good function has a local maximum. For the case where at the vertex the good function has a local minimum, we can do similarly.
We first construct a local function as in STEP 1 and embed the image into the plane as the set $\{(x,y) \in {\mathbb{R}}^2 \mid -1 \leq x \leq 1,y=x^2\}$ sending the singular value to the origin. See FIGURE \ref{fig:2}: $F_1$ and $F_2$ represent the preimages of $(-t,t^2)$ and $(t,t^2)$ ($t>0$), respectively. $F_1 \sqcup F_2$ is the preimage of a regular value of the resulting local function, which is obtained by composing the canonical projection. We give a precise explanation.

We construct a desired local function as in STEP 1 first so that $F_1$ is diffeomorphic to the disjoint union of $a$ lines and $b$ circles and that $F_2$ is diffeomorphic to the disjoint union of $a$ lines and $c$ circles where $a$, $b$ and $c$ are arbitrary non-negative integers satisfying $(a,b,c) \neq (1,0,0),(0,1,0),(0,0,1),(0,0,0)$: we consider a case where the vertex is contained in exactly $2a$ edges at which the values of $l$ are $1$ and also contained in exactly $b+c$ edges at which the values of $l$ are $0$.

\begin{figure}
\includegraphics[width=35mm]{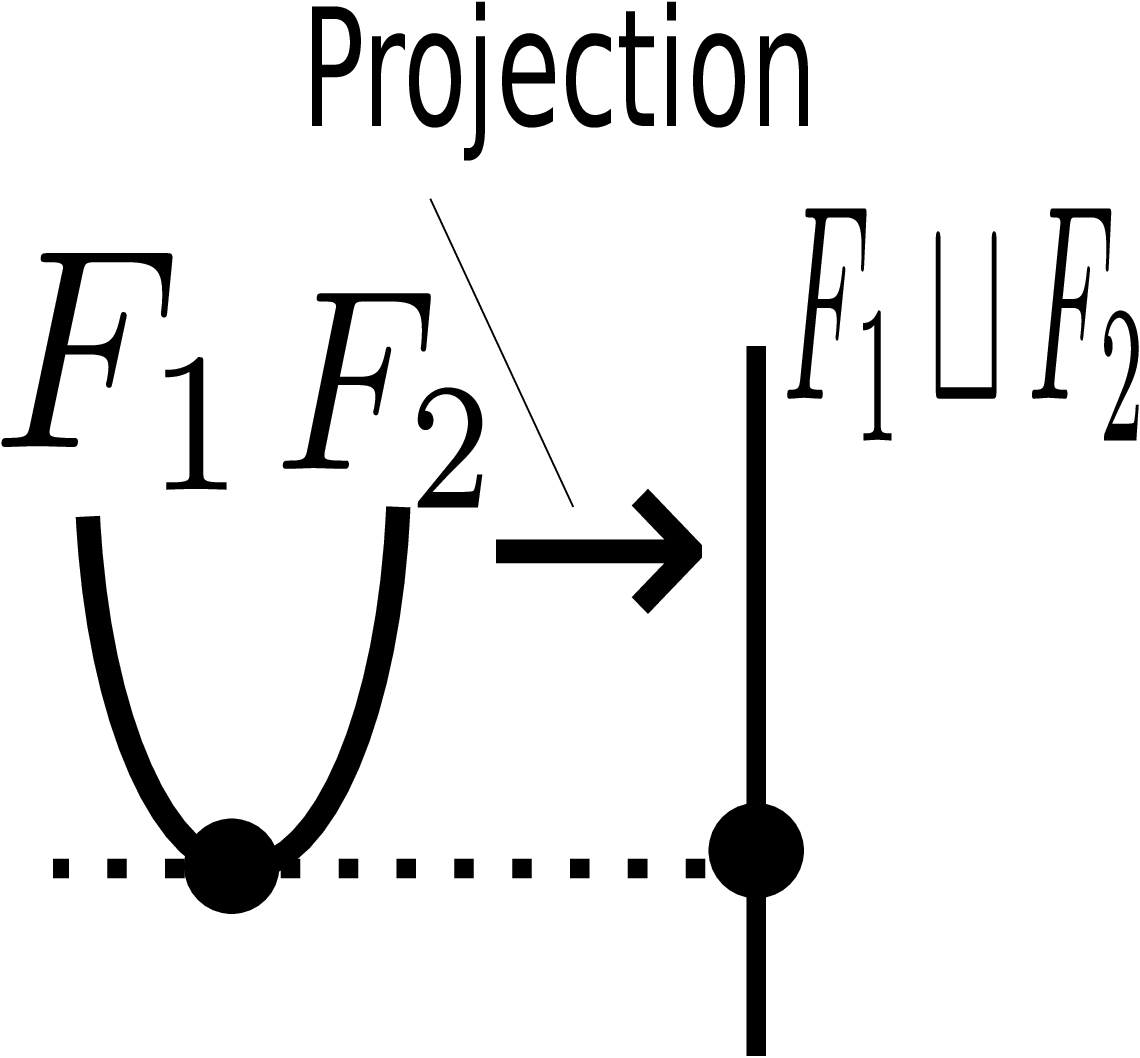}
\caption{Construction of a local function in Case 2.}
\label{fig:2}
\end{figure}

We compose the map into the plane with the projection $p(x,y):=y$ and we have a desired local smooth function.
By the construction, at each singular point, the function is a Morse-Bott function or represented as the composition of two Morse functions. As a result, we obtain a local function satisfying the first, second, third and fifth properties around the vertex.

In the case $(a,b,c)=(1,0,0),(0,1,1)$, we construct a trivial smooth bundle whose fiber is connected and diffeomorphic to $S^1$ or $\mathbb{R}$ instead of a Morse function before embedding the image into the plane. In this case, at the singular points, the function is Morse-Bott. 

This completes the proof for Case 2 in the present step for any $(a,b,c)$ and any case. \\
\ \\
\noindent STEP 3 Completing the construction. \\
Last, we construct functions around remaining parts. We can construct these functions as trivial smooth bundles. Gluing the local functions together on the $1$-dimensional manifolds in boundaries gives a desired function $f$ on a surface $M$. To make the resulting surface $M$ orientable, we must use the diffeomorphisms for the gluing carefully one after another and we can do. We also have the Reeb space of the local function and a PL homeomorphism from the Reeb space to a small regular neighborhood of the vertex in each STEP and each CASE as presented in STEP 1 and we also have a desired isomorphism $\phi:W_f \rightarrow G$. Note also that we must scale each local function so that the value at the singular point $p$ is $h(\phi(q_f(p))$ in each STEP and each CASE.

This completes the proof.
\end{proof}

We can show a higher dimensional version of Theorem \ref{thm:2} or Theorem \ref{thm:3} similarly by virtue of Remark \ref{rem:1} in the following.

\begin{Rem}
\label{rem:1}
Let $n>1$ be an integer. Let $a$, $b$ and $c$ be non-negative integers satisfying $$(a,b,c) \neq (1,0,0),(0,1,0),(0,0,1),(0,0,0).$$
Consider the disjoint union $F$ of $a$ copies of ${\mathbb{R}}^n$ and $b$ copies of $S^n$ and $F \times [0,1]$. We can attach $1$-handles to $F \times \{0\}$ and $n$-handles to this. As a result, for an arbitrary positive integer $c$, we obtain an ($n+1$)-dimensional connected and orientable manifold whose boundary is the disjoint union of two $n$-dimensional manifolds. One is the disjoint union of $a$ copies of ${\mathbb{R}}^n$ and $b$ copies of $S^n$ or $F \times \{1\}$. The other is the disjoint union of $a$ copies of ${\mathbb{R}}^n$ and $c$ copies of $S^n$. 

FIGURE \ref{fig:3} represents the attachments of $1$-handles and $n$-handles to the disjoint union of the manifolds diffeomorphic to ${\mathbb{R}}^n$ in $F \times \{0\}$ for $a>0$: we attach $a-1$ $1$-handles to connect $a$ copies of ${\mathbb{R}}^n$, an $n$-handle to each copy of ${\mathbb{R}}^n$ in the figure. We also need to attach $b$ $1$-handles to connect $b$ copies of $S^n$ and a copy of ${\mathbb{R}}^n$ in $F \times \{0\}$. In addition, we also need to attach $c$ $n$-handles to a copy of ${\mathbb{R}}^n$ in $F \times \{0\}$. If $a=0$ and $(b,c) \neq (1,1)$, then we attach $b$ $1$-handles to connect $b$ copies of $S^n$ and $c-1$ $n$-handles to a copy of $S^n$ in $F \times \{0\}$. If $a=0$ and $(b,c) = (1,1)$, then we attach a $1$-handle and an $n$-handle to the standard $n$-dimensional sphere in $F \times \{0\}$ to obtain a desired ($n+1$)-dimensional manifold.

Note that the handles are attached simultaneously in these cases. This yields STEP 1 of the proof of Theorem \ref{thm:2} in the proof of Theorem \ref{thm:3}. We can prove STEP 2 similarly. Here, in Case 1, we use a height function of an $n$-dimensional standard disk instead.
\end{Rem}
\begin{figure}
\includegraphics[width=30mm]{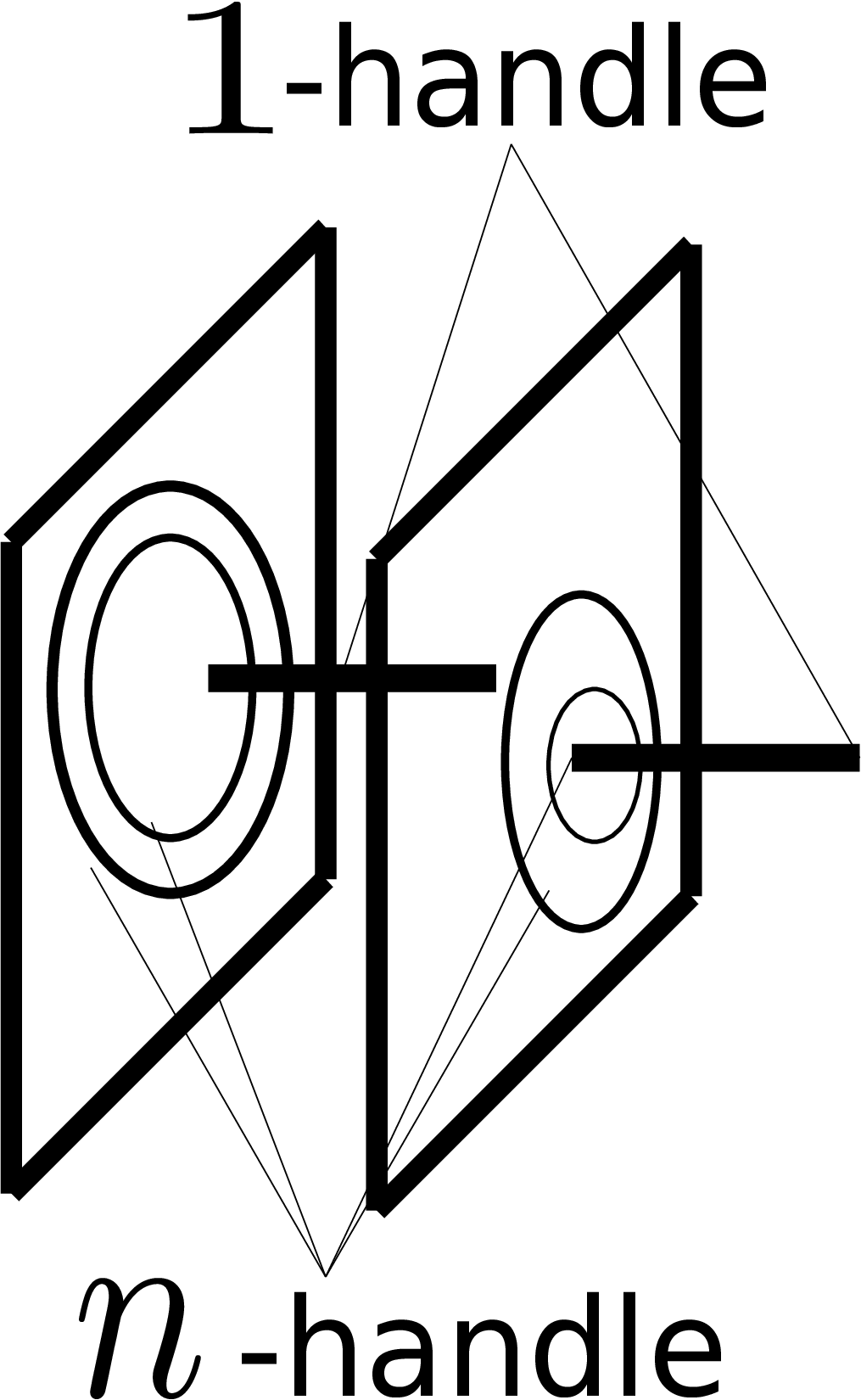}
\caption{A higher dimensional version of FIGURE \ref{fig:1} for $a>0$. We attach the $a-1$ $1$-handles and the $a$ $n$-handles to the disjoint union of manifolds diffeomorphic to ${\mathbb{R}}^n$ : for $a$ $n$-handles, only submanifolds diffeomorphic to $S^{n-1} \times [-1,1]=S^{n-1} \times D^1$ to which the $n$-handles are attached are depicted. Other handles and the attachments to copies of $S^n$ are omitted.}
\label{fig:3}
\end{figure}

We finish this section by presenting the higher dimensional version or Theorem \ref{thm:3}. Rigorous proofs are left to readers.

\begin{Thm}
\label{thm:3}
Let $n$ be a positive integer.
Let $G$ be a connected graph having at least one edge. Assume that a good function $h:G \rightarrow \mathbb{R}$ is given. Assume also that an integer $0$ or $1$ is assigned to each edge by a map $l$ satisfying the following conditions.
\begin{itemize}
\item For a vertex at which the good function does not have a local extremum, the number of edges at which the values of $l$ are $1$ containing the vertex as the starting points and that of edges at which the values of $l$ are $1$ containing the vertex as the ending points agree. Moreover, if the number of edges at which the values of $l$ are $1$ containing the vertex as the starting points is $1$, then there exists an edge at which the value of $l$ is $0$ containing the vertex as the starting point or ending point.
\item For a vertex at which the good function has a local extremum, the number of edges at which the values of $l$ are $1$ are even. 
\end{itemize}
Then there exist an {\rm (}$n+1${\rm )}-dimensional connected and orientable manifold and a smooth function $f:M \rightarrow \mathbb{R}$ satisfying the following properties.
\begin{enumerate}
\item The Reeb graph $W_f$ is isomorphic to $G$ and an isomorphism $\phi:W_f \rightarrow G$ exists.
\item If we consider the natural quotient map onto the Reeb graph $W_f$ and a point $p \in W_f$ such that $\phi(p)$ is in the interior of an edge $e$ of $G$ satisfying $l(e)=q$, then the preimage is diffeomorphic to $S^n$ {\rm (}$q=0${\rm )} or ${\mathbb{R}}^n$ {\rm (}$q=1${\rm )}. 
\item For a point $p \in M$ mapped by the quotient map $q_f$ to a vertex $v$, $f(p)=h(\phi(v))$.   
\item At each singular point at which the function $f$ does not have a local extremum, the local function is a Morse function.
\item At each singular point at which the function $f$ has a local extremum, the local function is a Morse function, Morse-Bott function, or represented as the composition of two Morse functions.
\end{enumerate}
\end{Thm}

\section{Modifications of Theorems \ref{thm:2} and \ref{thm:3} where the original assumptions do not hold.}
\label{sec:3}
\begin{Thm}
\label{thm:4}
Let $G$ be a connected graph having at least one edge. Assume that a good function $h:G \rightarrow \mathbb{R}$ is given. Assume also that an integer $0$ or $1$ is assigned to each edge by a map $l$. We assume that the original assumption of Theorem \ref{thm:2} does not hold.

Then there exist a connected an orientable surface $M$ and a smooth function $f:M \rightarrow \mathbb{R}$ satisfying the following properties.
\begin{enumerate}
\item The Reeb graph $W_f$ is isomorphic to $G$ and an isomorphism $\phi:W_f \rightarrow G$ exists.
\item If we consider the natural quotient map onto the graph and for each point that is not a vertex and that is in an edge $e$ satisfying $l(e)=q$, then the preimage is diffeomorphic to a circle {\rm (}$q=0${\rm )} or a line {\rm (}$q=1${\rm )}.
\item For a point $p \in M$ mapped by the quotient map $q_f$ to a vertex $v$, $f(p)=h(\phi(v))$. 
\item At singular points at which the function $f$ does not have local extrema, except finitely many ones, the local functions are Morse functions or Morse-Bott functions.
\item At singular points at which the function $f$ have local extrema, except finitely many ones, the local functions are Morse functions, Morse-Bott functions, or represented as the compositions of two Morse functions or the compositions of Morse-Bott functions with Morse functions.
\end{enumerate}
\end{Thm}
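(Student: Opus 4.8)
The plan is to reuse the local-model strategy of the proof of Theorem \ref{thm:2}, changing the construction only at the vertices where its hypotheses fail and allowing those pieces to contribute a finite, controlled number of singular points outside the admissible classes (Morse, Morse--Bott, or compositions). For every vertex $v$ of $G$ I would record the numbers $a_-,a_+$ of edges labelled $1$ ending at and starting at $v$ and $b,c$ of edges labelled $0$ ending at and starting at $v$. Since $G$ is finite, only finitely many vertices $v_1,\dots ,v_m$ violate one of the two conditions of Theorem \ref{thm:2}; at every other vertex the constructions of STEPS 1--3 of the proof of Theorem \ref{thm:2} apply verbatim and produce only Morse, Morse--Bott or composition-type singular points, keeping properties (1)--(5) locally. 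So the task reduces to building, around each defect vertex $v_i$, a local surface and a smooth function whose preimages over the interiors of the adjacent edges are circles or lines according to the labels, whose preimage over $v_i$ is connected and contains a singular point, whose value at $v_i$ equals $h(\phi(v_i))$, and which is inadmissible at only finitely many singular points.

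There are three ways a vertex can be a defect vertex, and I would meet each with a degenerate or boundary local model. (i) If $v_i$ is not a local extremum but $a_-=a_+=1$ and $b=c=0$, take the model $(x,t)\mapsto h(\phi(v_i))+t^{3}+x^{2}$ on $\mathbb R^{2}$: all of its preimages are single lines and it has exactly one inadmissible (cuspidal) critical point. (ii) If $v_i$ is a local extremum whose number of edges labelled $1$ is odd, cap all the circles and all but one of the lines by the folding construction of STEP 2, Case 2 of the proof of Theorem \ref{thm:2}, and cap the last line by a boundary model: on the strip $\mathbb R\times[0,1]$, whose boundary contains the line $\{t=0\}$, the function $(x,t)\mapsto h(\phi(v_i))-t^{2}$ has $\{t=0\}$ (a Morse--Bott critical submanifold) as the preimage of its maximal value and a single line as every nearby preimage, so one line can be capped at an extremum once $M$ is allowed to have boundary; this cap and the fibre produced by the folding construction are then joined into a single connected preimage over $v_i$. (iii) If $v_i$ is not a local extremum but $a_-\neq a_+$, the change of line count cannot come from $1$-handles, which never alter the number of line-type boundary components; I would produce it by a degenerate model as in (i) together with the folding idea, or -- exploiting that $M$ may be open, which is in any case the new feature of the theorem -- by letting the $|a_--a_+|$ surplus lines run off toward an end of $M$ in the way the half-open edges already permitted by the theorem do, with a single degenerate critical point on the preimage of $v_i$ to make it a genuine vertex.

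Assembly then follows STEP 3 of the proof of Theorem \ref{thm:2}: over the edge interiors I would use trivial bundles with fibre a circle or a line according to the labels, glue the local pieces along the $1$-dimensional manifolds in their boundaries, and choose the gluing diffeomorphisms successively so that $M$ is orientable, after which properties (1), (2), (3), (5) and property (4) away from $v_1,\dots ,v_m$ hold exactly as in Theorem \ref{thm:2}. I expect the main obstacle to be precisely the phenomenon responsible for the hypotheses of Theorem \ref{thm:2}: $1$-handles do not change the number of line-type boundary components, and a single line cannot be capped at an extremum by a Morse or Morse--Bott singularity with the vertex fibre in the interior, so the parity and balance conditions are genuine obstructions to a fully admissible construction. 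The delicate points are (a) designing the degenerate, boundary and open-end models so that each preimage over an adjacent edge interior is exactly a circle or a line, with no extra components and, in particular, with nothing escaping to infinity inside a model at a value other than $h(\phi(v_i))$; (b) keeping the inadmissible singular points finite in number, which is why they should be isolated cuspidal points and the flat or boundary constructions confined to compact regions; and (c) verifying that the joining bands and the models themselves can be inserted so that $M$ stays orientable and the Reeb graph acquires no new vertices. I anticipate that the hardest single point is making the open-end mechanism in case (iii) actually realise the prescribed edge as one ending at $v_i$, rather than a half-open edge attached to no vertex.
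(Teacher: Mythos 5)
Your overall framing---treat the finitely many vertices where the hypotheses of Theorem \ref{thm:2} fail by special local models, allow finitely many non-admissible singular points there, and glue everything as in STEP 3 of Theorem \ref{thm:2}---is the same as the paper's, and your cusp model $(x,t)\mapsto h(\phi(v_i))+t^{3}+x^{2}$ for a non-extremal vertex with $a_-=a_+=1$, $b=c=0$ is a legitimate (and simpler) substitute for the paper's CASE 1-A, since every level set is a line and only the one cuspidal point is outside the admissible classes. However, the two cases that carry the real content of Theorem \ref{thm:4} are not actually established. In your case (ii) you cap the last line by the strip $\mathbb{R}\times[0,1]$ with $(x,t)\mapsto h(\phi(v_i))-t^{2}$, i.e.\ you let $M$ acquire boundary. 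The theorem asks for a surface, and the whole paper concerns closed or open (boundaryless) manifolds; the boundary line $\{t=0\}$ sits over the extremal value, so nothing is ever glued to it and it survives into the final $M$. The paper instead builds a boundaryless model: a fold map $\tilde{F_1}$ into the plane whose fold curve has image $\{(t,0)\mid t\le 0\}\cup\{(t,g(t))\mid 0<t<p\}$ for a $C^{\infty}$ function $g$ vanishing for $t\le 0$ and strictly increasing afterwards (e.g.\ $g(t)=e^{-1/t}$); composing with the projection $(x,y)\mapsto y$ gives a function whose extremal level set is a closed ray consisting of critical points (Morse--Bott except at one point over the origin) and whose nearby level sets are single lines. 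The infinite-order tangency provided by the flat function is exactly what lets a line be capped at an extremum without boundary and without extra singular values.

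The same missing idea undermines your case (iii), which is the heart of the theorem ($a_-\neq a_+$ at a non-extremal vertex). You correctly note that $1$-handles cannot change the number of line components, but you then offer either an unspecified combination of the cusp and folding ideas or the ``run off toward an end'' mechanism; the latter fails for the reason you yourself flag: if the surplus line components escape to infinity as the value approaches $h(\phi(v_i))$, they do not accumulate on the fibre over $v_i$, so in the quotient the corresponding edge is half-open and $W_f$ is not isomorphic to $G$, violating property (1). The paper's CASES 1-B and 1-C resolve precisely this point with the same flat-function fold models: a fold map $\tilde{F}$ (resp.\ $\tilde{F^{\prime}}$) whose fold image contains the horizontal ray $A=\{(t,0)\mid t<0\}$ together with the graph $B$ of $-g$, inserted in place of $d-a$ trivial interval bundles of the Theorem \ref{thm:2} local function, so that as the value crosses the vertex level a new line splits off an existing line (or a circle opens into a line), the singular fibre stays connected and attached to all adjacent edge germs, the composed function has that single singular value, and all but finitely many of its singular points are Morse or Morse--Bott. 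Without concrete models of this kind your argument for cases (ii) and (iii) does not go through.
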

\begin{proof}
The proof is similar to that of Theorem \ref{thm:2} in considerable parts. However, there are several new ingredients.\\
 \\
\noindent STEP 1 Construction around a vertex at which the good function $h$ does not have a local extremum. \\
Let $a$ and $d$ be the number of edges at which the values of $l$ are $1$ containing the vertex as the starting points and the ending points, respectively. Let $b$ and $c$ be the numbers of edges at which the values or $l$ are $0$ containing the vertex as the ending points and the starting points, respectively.

By the assumption, the assumption of Theorem \ref{thm:2} does not hold. This means that either of the following holds.
\begin{itemize}
\item $a \neq d$ and either of the following hold.
\begin{itemize}
\item $a \neq 0$ and $d \neq 0$.
\item $a=0$ and $b>0$.
\item $d=0$ and $c>0$.
\end{itemize} 
\item $a=d=1$ and $(b,c)=(0,0)$.
\end{itemize}
\ \\
\noindent CASE 1-A $a=d=1$ and $(b,c)=(0,0)$ hold.
Let $p_0>0$. Let $g_0$ be a smooth function on $\mathbb{R}$ such that $g_0(x)=0$ for $x \geq p_0$ and $g_0(x_1)-g_0(x_2)>0$ for any $x_1<x_2 \leq p_0$. We introduce several points and subsets in the plane. Set $A_0:=\{(t,0) \mid t>p_0\}$ and $B_0:=\{(t,g_0(t)) \mid p_0-1<t<p_0\}$. We also set $-A_0:=\{-a \mid a \in A_0\}$ and $-B_0:=\{-b \mid b \in B_0\}$.

We can construct a smooth map $F_0$ from a surface into the plane satisfying the following properties.
\begin{itemize}
\item $F_0$ is a fold map from an open surface into the plane the interior of whose image is the domain bounded by the union of the following sets defined uniquely by the following condition: the image of the composition of $F_0$ with the projection $p(x,y):=y$ is $(-g_0(p_0-1),g_0(p_0-1)) \subset \mathbb{R}$.
\begin{itemize}
\item $A_0$.
\item $\{(p_0,0)\}$.
\item $B_0$.
\item $-A_0$.
\item $\{(-p_0,0)\}$.
\item $-B_0$.
\item $\{(t,g_0(p_0-1)) \mid t \leq p_0-1\}$. 
\item $\{(t,-g_0(p_0-1)) \mid t \geq 1-p_0\}$. 
\end{itemize}
\item Let $D_0$ denote the domain before. The image of $F_0$ is the disjoint union of $D_0$ and the first six sets before. Moreover, the union of the first six set is the singular value set of the fold map $F_0$ and the restriction of $F_0$ to the singular set is an embedding.
\item
Let ${\epsilon}_0>0$ be a small number.
$L_{0,1}:=D_0 \bigcap \{(t,g_0(t)-{\epsilon}_0) \mid t \in \mathbb{R}\}$. Over the union of the domain $U_1$ between the union $A_0 \bigcup \{(p_0,0)\} \bigcup B_0 \subset D_0$ and $L_{0,1} \subset D_0$ and the union $A_0 \bigcup \{(p_0,0)\} \bigcup B_0$, $F_0$ is represented as the product of a Morse function on an interval with exactly one singular point in the interior and the identity map on the union $A_0 \bigcup \{(p_0,0)\} \bigcup B_0$, which is a line: note that the graph of the Morse function on an interval is a parabola as in FIGURE \ref{fig:2}.  
\item
$L_{0,2}:=D_0 \bigcap \{(t,-g_0(t)+{\epsilon}_0) \mid t \in \mathbb{R}\}$. Over the union of the domain $U_2$ between the union $-A_0 \bigcup \{(-p_0,0)\} \bigcup -B_0 \subset D_0$ and $L_{0,2} \subset D_0$ and the union $-A_0 \bigcup \{(-p_0,0)\} \bigcup -B_0$, $F_0$ is represented as the product of a Morse function on a closed interval with exactly one singular point in the interior and the identity map on the union $-A_0 \bigcup \{(-p_0,0)\} \bigcup -B_0$, which is a line: note that the graph of the Morse function on an interval is a parabola as in FIGURE \ref{fig:2}.  

\item Over the complement of the union of $U_1 \sqcup U_2$ of the two domains in the previous two properties in $D_0$, $F_0$ is represented as the projection of a trivial smooth bundle whose fiber is a two-point set. 
\item Canonically, we can extend $F_0$ to a smooth map $\tilde{F_0}$ from a surface which is not closed or open. More precisely, this satisfies the following conditions.
\begin{itemize}
\item The image of $\tilde{F_0}$ is the closure of $D_0$.
\item The preimages of $(p_0-1,g_0(p_0-1))$ and $(1-p_0,g_0(p_0-1))$ are single points.
\item The preimages of points in the complement of the union of the image of $F_0$ and $\{(p_0-1,g_0(p_0-1)),(1-p_0,g_0(p_0-1))\}$ in the image of $\tilde{F_0}$ consist of two points. 
\end{itemize}
\end{itemize}

We can define the composition of $\tilde{F_0}$ with the projection $p(x,y):=y$. The singular value set is $\{0\}$. See also FIGURE \ref{fig:4_0}. After scaling, we can obtain a desired local function satisfying the first, second, third and fourth conditions in the statement. By the construction, at each singular point, except finitely many ones, the local function is a Morse-Bott function. More precisely, the preimage of points in $A_0$ and $-A_0$ by $\tilde{F_0}$ are single points and singular points at which the functions are Morse-Bott functions.

\begin{figure}
\includegraphics[width=35mm]{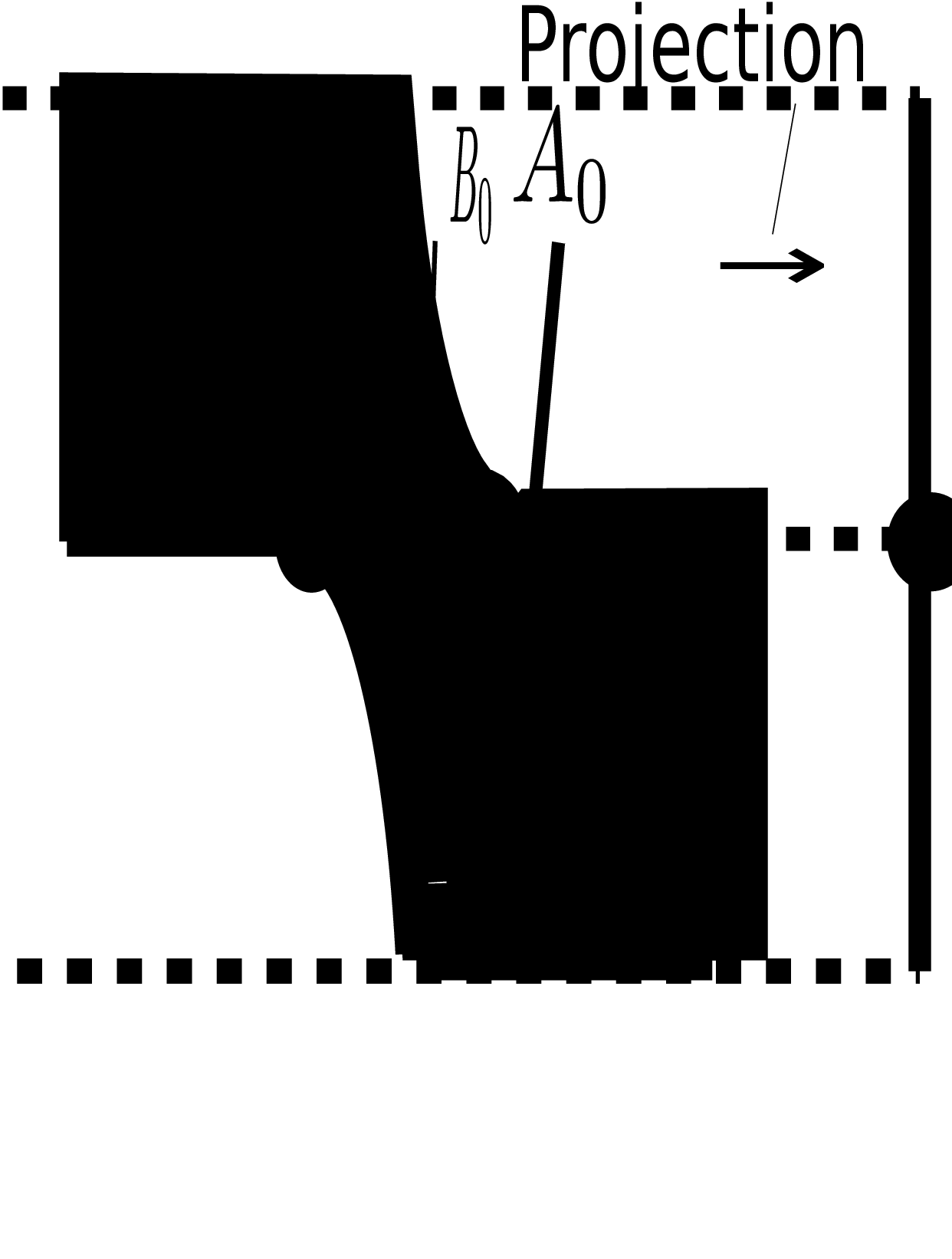}
\caption{The image of a smooth map $\tilde{F}$ into the plane and the composition of the map with the projection $p$: the dot on the line in the right is the singular value.}
\label{fig:4_0}
\end{figure}
\ \\
\noindent CASE 1-B Either of the following holds.
\begin{itemize}
\item  $a \neq d$, $a \neq 0$ and $d \neq 0$ hold.
\item $a \neq d$, $b>0$, $c>0$ hold and either $a=0$ or $d=0$ holds.
\end{itemize}

Suppose $d>a$.

First, we construct a local Morse function as in Theorem \ref{thm:2} where "$a$", "$b$" and "$c$" in the proof of Theorem \ref{thm:2} are $a$, $b$ and $c$ here respectively except the case $(a,b,c)=(1,0,0),(0,1,1)$, a local smooth function in CASE 1-A for the case $(a,b,c)=(1,0,0)$, and a trivial smooth $S^1$-bundle over a closed or open interval for the case $(a,b,c)=(0,1,1)$. By fundamental properties on structures of these functions, we can construct a local function so that we can find disjoint $d-a$ copies of small smooth trivial bundles whose fibers are intervals over the image and that the projections of the bundles are regarded as the restrictions of the original function here. To obtain a desired function, we change these $d-a$ functions to new smooth functions. We construct the function.

Let $g$ be a smooth function on $\mathbb{R}$ such that $g(x)=0$ for $x \leq 0$ and $g(x_2)-g(x_1)>0$ for any $0 \leq x_1<x_2$ (set $g(x)=e^{-\frac{1}{x}}$ for $x>0$ for example). Let $p_1$ and $p_2$ be positive numbers. Let $p_3<p_4$ be positive numbers satisfying $p_3>p_2$.
We introduce several points and subsets in the plane. Set $A:=\{(t,0) \mid t<0\}$, $B:=\{(t,-g(t)) \mid 0<t<p_1\}$, $C:=\{(t,(t-p_2)^2) \mid 0 \leq (t-p_2)^2<{(p_3-p_2)}^2\}$ and $O:=(0,0)$.
We can construct a smooth map $F$ from a surface into the plane satisfying the following properties.
\begin{itemize}
\item $F$ is a fold map from an open surface into the plane the interior of whose image is the domain bounded by the union of the following sets defined uniquely by the following condition: the image of the composition of $F$ with the projection $p(x,y):=y$ is $(-g(p_1),(p_3-p_2)^2) \subset \mathbb{R}$.
\begin{itemize}
\item $A$.
\item $\{O\}$.
\item $B$.
\item $\{(t,-g(p_1))\mid p_1 \leq t \leq p_4\}$.
\item $\{(p_4,t) \mid -g(p_1) \leq t \leq (p_3-p_2)^2\}$.
\item $\{(t,{(p_3-p_2)}^2)\mid p_3 \leq t \leq p_4\}$.
\item $C$.
\item $\{(t,{(p_3-p_2)}^2)\mid t \leq p_2-(p_3-p_2)\}$.
\end{itemize}
\item Let $D$ denote the domain before. The image of $F$ is the disjoint union of $D$, $A$, $\{O\}$, $B$ and $C$. Moreover, the union $A \bigcup \{O\} \bigcup B \bigcup C$ is the singular value set of the fold map $F$ and the restriction of $F$ to the singular set is an embedding.
\item
Let $\epsilon>0$ be a small number.
$L_1:=D \bigcap \{(t,-g(t)+\epsilon) \mid t \in \mathbb{R}\}$. Over the union of the domain $U_3$ between the union of $A \bigcup \{O\} \bigcup B \subset D$ and $L_1 \subset D$ and the union of $A \bigcup \{O\} \bigcup B$, $F$ is represented as the product of a Morse function on an interval with exactly one singular point and the identity map on the union
 of $A \bigcup \{O\} \bigcup B$, which is a line: note that the graph of the Morse function on an interval is a parabola as in FIGURE \ref{fig:2}.  
\item
Let $L_2:=D \bigcap \{(t,{(t-p_2)}^2-\epsilon) \mid t \in \mathbb{R}\}$. Over the union of the domain $U_4$ between $C \subset D$ and $L_2 \subset D$ and $C$, $F$ is represented as the product of a Morse function on an interval with exactly one singular point as just before and the identity map on $C$, which is a line.  
\item Over the complement of the union $U_3 \sqcup U_4$ of the two domains before in $D$, $F$ is represented as a trivial smooth bundle whose fiber is a disjoint union of two points. 
\item Canonically, we can extend $F$ to a smooth map $\tilde{F}$ on a surface which is not closed or open. More precisely, this satisfies the following properties.
\begin{itemize}
\item The image of $\tilde{F}$ is the closure of $D$.
\item The preimages of $(p_1,-g(p_1))$, $(p_3,{(p_3-p_2)}^2)$ and $(p_2-(p_3-p_2),{(p_3-p_2)}^2)$ are single points.
\item The preimages of points in the complement of the union of the image of $F$ and $\{(p_1,-g(p_1)),(p_3,{(p_3-p_2)}^2),(p_2-(p_3-p_2),{(p_3-p_2)}^2)\}$ in the image of $\tilde{F}$ consist of two points. 
\end{itemize}
\end{itemize}

We can define the composition of $\tilde{F}$ with the projection $p(x,y):=y$. The singular value set is $\{0\}$. See also FIGURE \ref{fig:4}. After scaling, we can obtain desired new functions instead of $d-a$ trivial bundles. Thus we can obtain a desired local function satisfying the first, second, third and fourth conditions in the statement. By the construction, at each singular point, except finitely many ones, the local function is a Morse or a Morse-Bott function. More precisely, the preimage of $(p_2,0)$ by $\tilde{F}$ is a single point and a singular point at which the function is Morse. The preimages of points in $A$ by $\tilde{F}$ are single points and singular points at which the functions are Morse-Bott functions. FIGURE \ref{fig:5} represents a local deformation of preimages when the value of the local function $p \circ \tilde{F}$ increases.

\begin{figure}
\includegraphics[width=35mm]{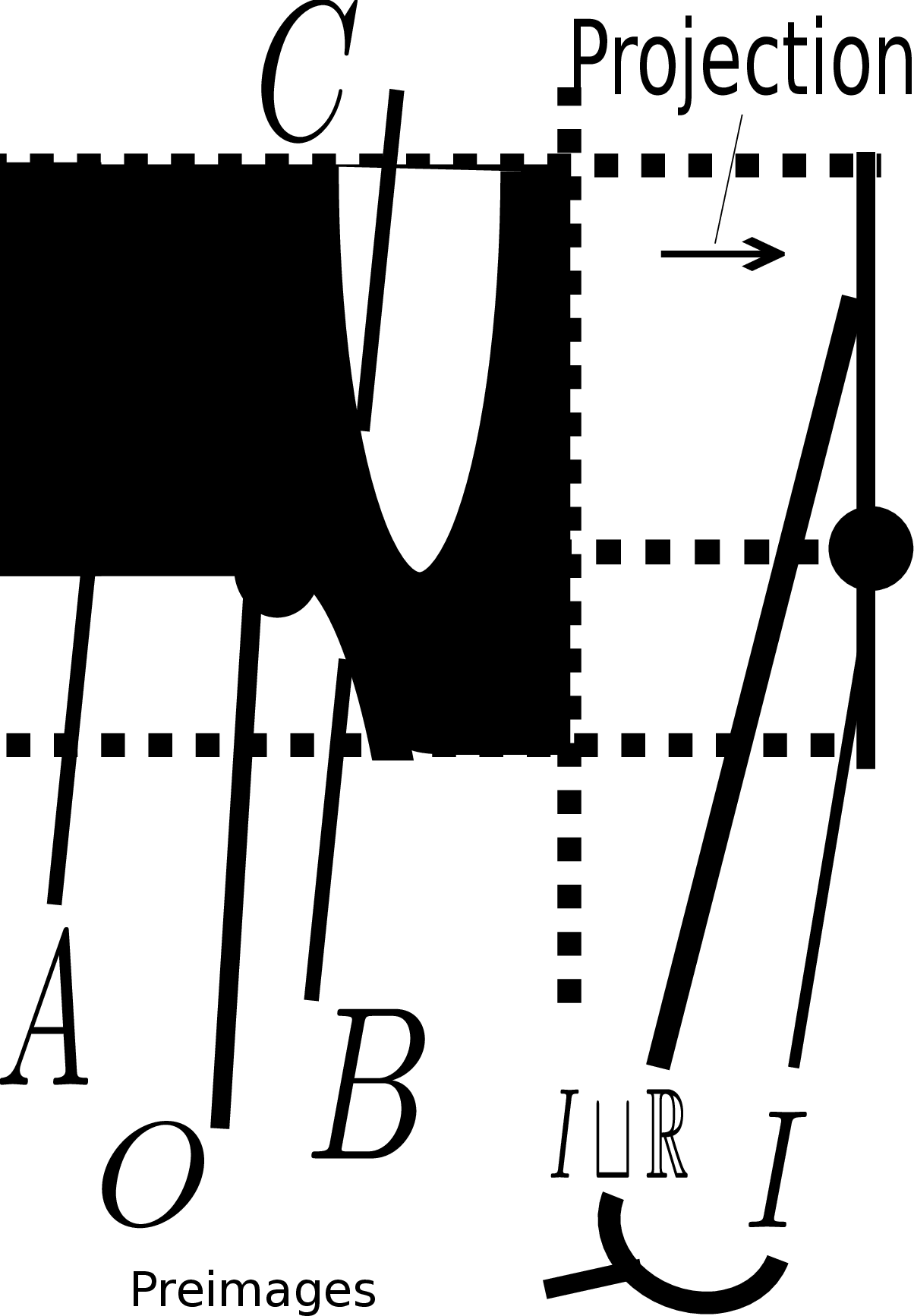}
\caption{The image of a smooth map $\tilde{F}$ into the plane and the composition of the map with the projection $p$: $I$ in the explanation of preimages of regular values denotes a closed interval and the dot on the line in the right is the singular value.}
\label{fig:4}
\end{figure}

\begin{figure}
\includegraphics[width=35mm]{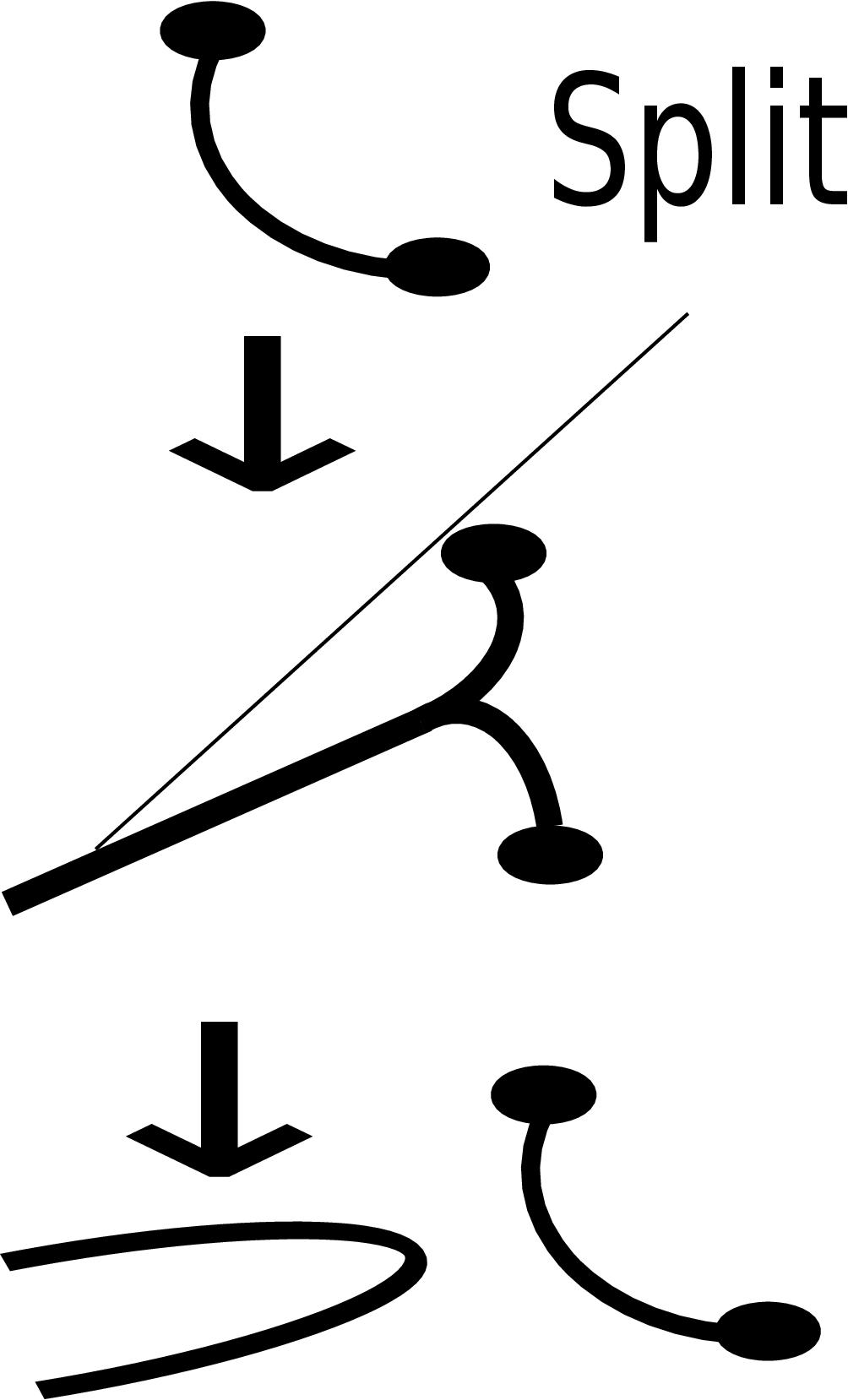}
\caption{A local deformation of preimages when the value of the function increases.}
\label{fig:5}
\end{figure}

In the case $a>d$, we consider the minus of the function obtained above. \\
 \\
\noindent CASE 1-C $a \neq d$, $a=0$, $b>0$, and $c=0$ hold.\\
First, we construct a local Morse function as in Theorem \ref{thm:2} where "$a$" and "$b$" in the proof of Theorem \ref{thm:2} are $a$ and $b$ respectively and "$c$" in the proof is $1$.
As we did in CASE 1-B, we consider $d-a=d$ trivial bundles and change them into new functions. One of these functions is changed into a new function. We construct this new function.

Let $g$ be a smooth function on $\mathbb{R}$ such that $g(x)=0$ for $x \leq 0$ and $g(x_2)-g(x_1)>0$ for any $0 \leq x_1<x_2$ (set $g(x)=e^{-\frac{1}{x}}$ as in CASE 1-B). Let $p_1$ and $p_2$ be positive numbers and $p_3<p_4$ be positive numbers satisfying $p_3>p_2$ as in CASE 1-B.
We introduce several points and subsets in the plane. We set $A^{\prime}:=\{(t,0) \mid t<0\}$, $B^{\prime}:=\{(t,-g(t)) \mid 0<t<p_1\}$ and $O:=(0,0)$ as before.

We can construct a smooth map $F^{\prime}$ from a surface into the plane satisfying the following properties.
\begin{itemize}
\item $F^{\prime}$ is a fold map from an open surface into the plane the interior of whose image is the domain bounded by the union of the following sets defined uniquely by the following condition: the image of the composition of $F^{\prime}$ with the projection $p(x,y):=y$ is $(-g(p_1),(p_3-p_2)^2) \subset \mathbb{R}$.
\begin{itemize}
\item $A^{\prime}$.
\item $\{O\}$.
\item $B^{\prime}$.
\item $\{(t,-g(p_1))\mid p_1 \leq t \leq p_4\}$.
\item $\{(p_4,t) \mid -g(p_1) \leq t \leq (p_3-p_2)^2\}$.
\item $\{(t,{(p_3-p_2)}^2)\mid t \leq p_4\}$.
\end{itemize}
\item Let $D^{\prime}$ denote the domain before. The image of $F^{\prime}$ is the disjoint union of $D^{\prime}$, $A^{\prime}$, $\{O\}$ and $B^{\prime}$. Moreover, the union of $A^{\prime} \bigcup \{O\} \bigcup B^{\prime}$ is the singular value set of the fold map $F^{\prime}$ and the restriction of $F^{\prime}$ to the singular set is an embedding.
\item
Let ${\epsilon}^{\prime}>0$ be a small number.
${L_1}^{\prime}:=D^{\prime} \bigcap \{(t,-g(t)+{\epsilon}^{\prime}) \mid t \in \mathbb{R}\}$. Over the union of the domain $U_5$ between the union of $A^{\prime} \bigcup \{O\} \bigcup B^{\prime} \subset D$ and ${L_1}^{\prime} \subset D$ and the union $A^{\prime} \bigcup \{O\} \bigcup B^{\prime}$, $F^{\prime}$ is represented as the product of a Morse function on an interval with exactly one singular point and the identity map on the union
 of $A^{\prime} \bigcup \{O\} \bigcup B^{\prime}$, which is a line: note that the graph of the Morse function on an interval is a parabola as in FIGURE \ref{fig:2}.  
\item Over the complement of the domain $U_5$ in the previous property in $D^{\prime}$, $F^{\prime}$ is represented as the projection of a trivial smooth bundle whose fiber is a two-point set. 
\item Canonically, we can extend $F^{\prime}$ to a smooth map $\tilde{F^{\prime}}$ on a surface which is not closed or open. More precisely, this satisfies the following properties.
\begin{itemize}
\item The image of $\tilde{F^{\prime}}$ is the closure of $D^{\prime}$.
\item The preimage of $(p_1,-g(p_1))$ is a single point.
\item The preimages of points in the complement of the union of the image of $F^{\prime}$ and $\{(p_1,-g(p_1))\}$ in the image of $\tilde{F^{\prime}}$ consist of two points. 
\end{itemize}
\end{itemize}

We can define the composition of $\tilde{F^{\prime}}$ with the projection $p(x,y):=y$. The singular value set is $\{0\}$. See also FIGURE \ref{fig:6}. 

After scaling, we can obtain the new function. We remove the original $d-a$ trivial bundles as in CASE 1-B. We use this new function and $d-a-1$ copies of a function used in CASE 1-B instead of the trivial bundles. We can obtain a desired local function on an orientable surface satisfying the first, second, third and fourth properties in the statement. By the construction, at each singular point, except finitely many ones, the local function is a Morse-Bott function. The preimages of points in $A$ by $\tilde{F^{\prime}}$ are single points and singular points at which the functions are Morse-Bott functions. FIGURE \ref{fig:7} represents a local deformation of preimages when the value of the local function $p \circ \tilde{F^{\prime}}$ increases.

\begin{figure}
\includegraphics[width=35mm]{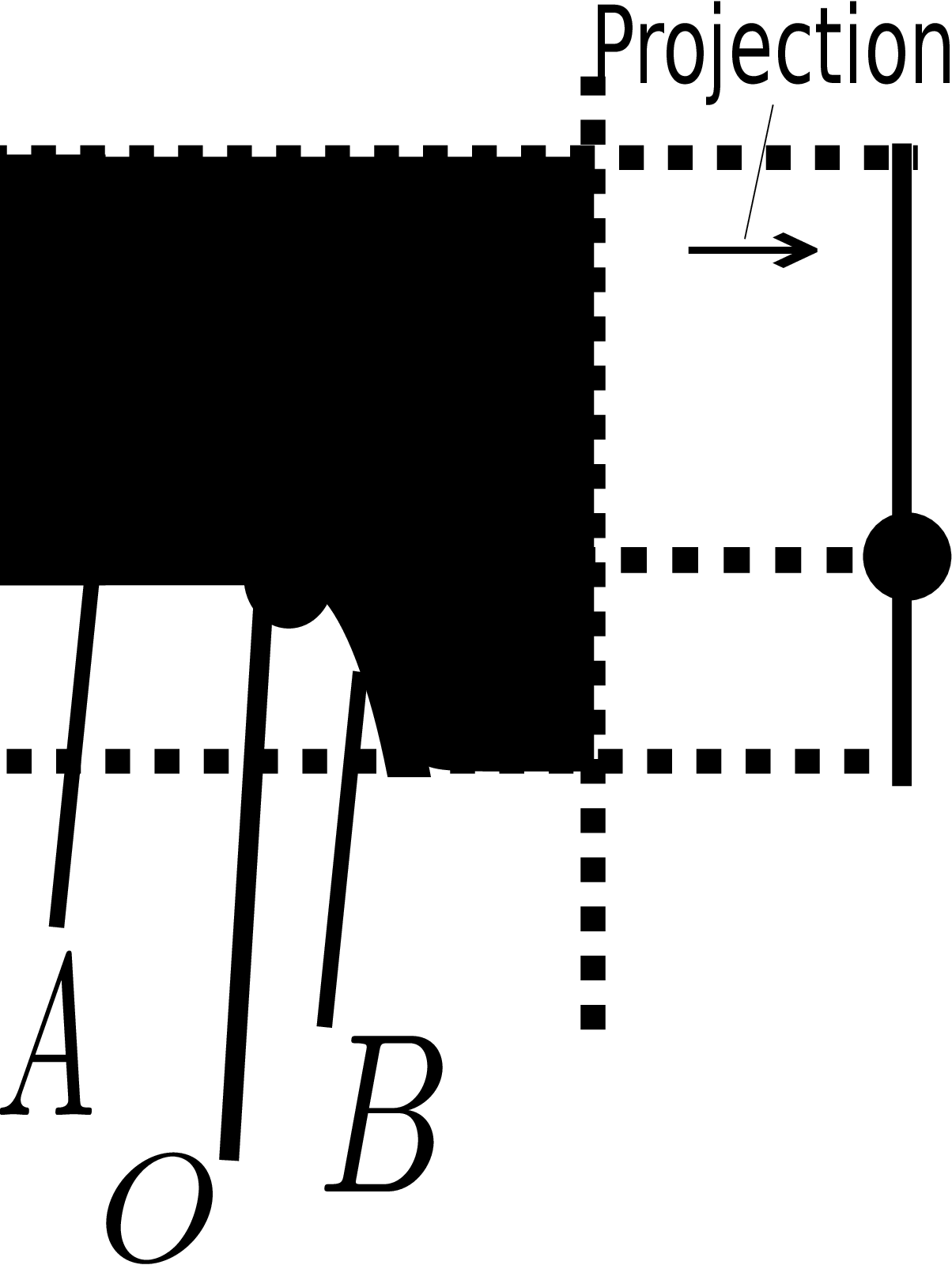}
\caption{The image of a smooth map $\tilde{F^{\prime}}$ into the plane and the composition of the map with the projection $p$: the dot on the line in the right is the singular value.}
\label{fig:6}
\end{figure}

\begin{figure}
\includegraphics[width=35mm]{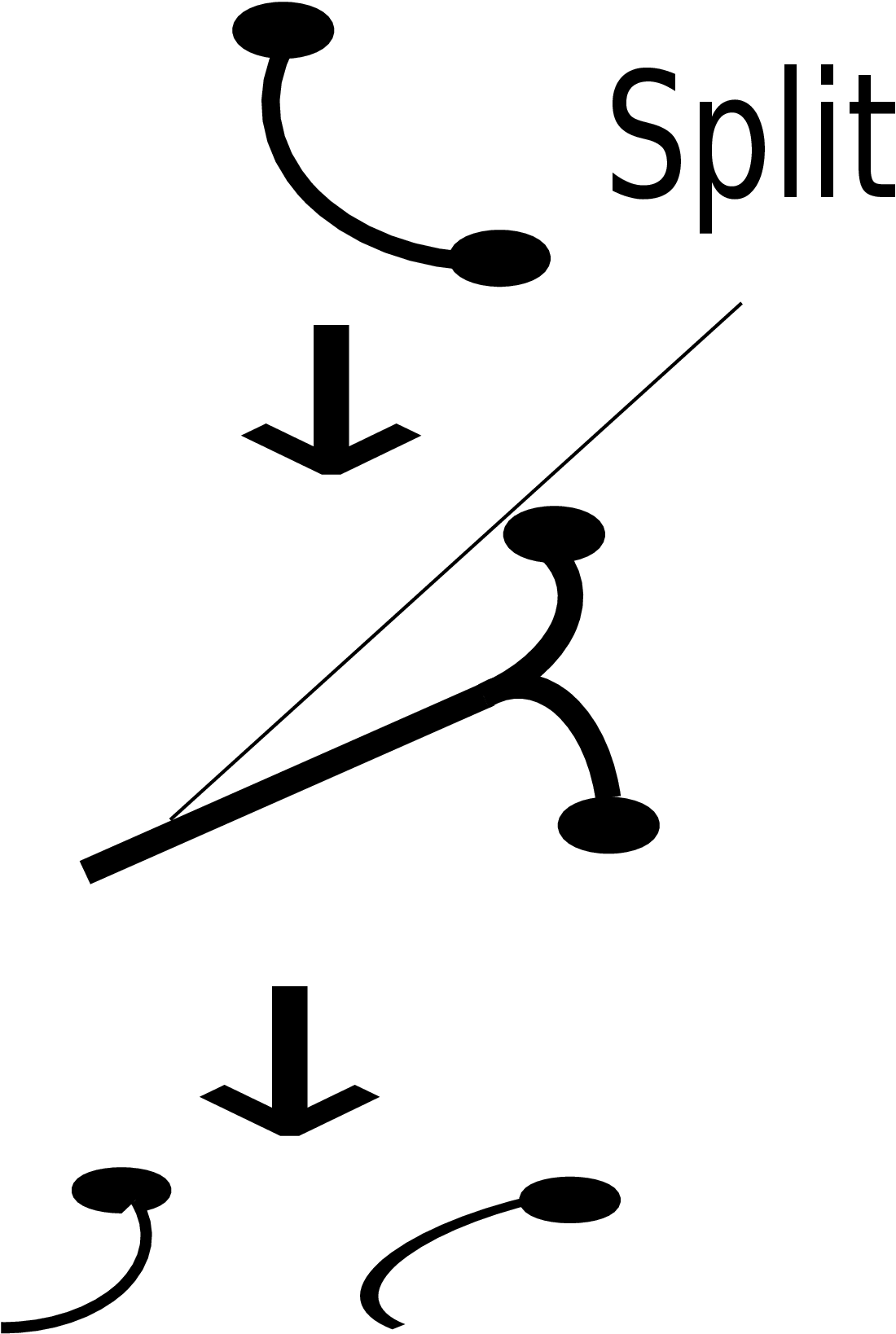}
\caption{A local deformation of preimages when the value of the function increases.}
\label{fig:7}
\end{figure}
\ \\
\noindent CASE 1-D $a \neq d$, $d=0$, $c>0$, and $b=0$ hold.
we consider the minus of the function obtained in CASE 1-C.

We have considered all cases and this completes STEP 1. \\

\noindent STEP 2 Construction around a vertex at which the good function $h$ has a local extremum. \\

If the vertex is of degree larger than $1$, then we can argue as we do in the proof of Theorem \ref{thm:2} using the local function. 

If the vertex is of degree $1$, then $l(e)=1$ for the edge $e$ containing the vertex.

Let $g$ be a smooth function on $\mathbb{R}$ such that $g(x)=0$ for $x \leq 0$ and $g(x_2)-g(x_1)>0$ for any $0 \leq x_1<x_2$ as in STEP 1. We introduce several points and subsets in the plane as in STEP 1. Set $A_1:=\{(t,0) \mid t<0\}$ and $B_1:=\{(t,g(t)) \mid 0<t<p\}$ for a positive number $p>0$. Set $O:=(0,0)$ also in this case.

We can construct a smooth map $F_1$ from a surface into the plane satisfying the following properties.
\begin{itemize}
\item $F_1$ is a fold map from an open surface into the plane the interior of whose image is the domain bounded by the union of the following sets defined uniquely by the following condition: the image of the composition of this with the projection $p(x,y):=y$ is $[0,g(p)) \subset \mathbb{R}$.
\begin{itemize}
\item $A_1$.
\item $\{O\}$.
\item $B_1$.
\item $\{(t,g(p))\mid t \leq p\}$.
\end{itemize}
\item Let $D_1$ denote the domain before. The image of $F_1$ is the union of $D_1$, $A_1$, $\{O\}$ and $B_1$. Moreover, the union $A_1 \bigcup \{O\} \bigcup B_1$ is the singular value set of the fold map $F_1$ and the restriction of $F_1$ to the singular set is an embedding. Moreover, as in the case for the maps $F_0$, $F$, $F^{\prime}$ and $F$ before, over a suitable small collar neighborhood of the singular value set and the preimage, $F_1$ is represented as the product map of a Morse function on an interval with exactly one singular point and the identity map on the union $A_1 \bigcup \{O\} \bigcup B_1$.
\item Canonically, we can extend $F_1$ to a smooth map $\tilde{F_1}$ on a surface which is not closed or open. More precisely, this satisfies the following properties.
\begin{itemize}
\item The image of $\tilde{F_1}$ is the closure of $D_1$.
\item The preimage of $(p,g(p))$ is a single point.
\item The preimages of points in the complement of the union of the image of $F_1$ and $\{(p,g(p))\}$ in the image of $\tilde{F_1}$ consist of two points. 
\end{itemize}
\end{itemize}

We can define the composition of $\tilde{F_1}$ with the projection $p(x,y):=y$. The singular value set is $\{0\}$. See also FIGURE \ref{fig:8}. After scaling, we can obtain a desired local function. 

\begin{figure}
\includegraphics[width=35mm]{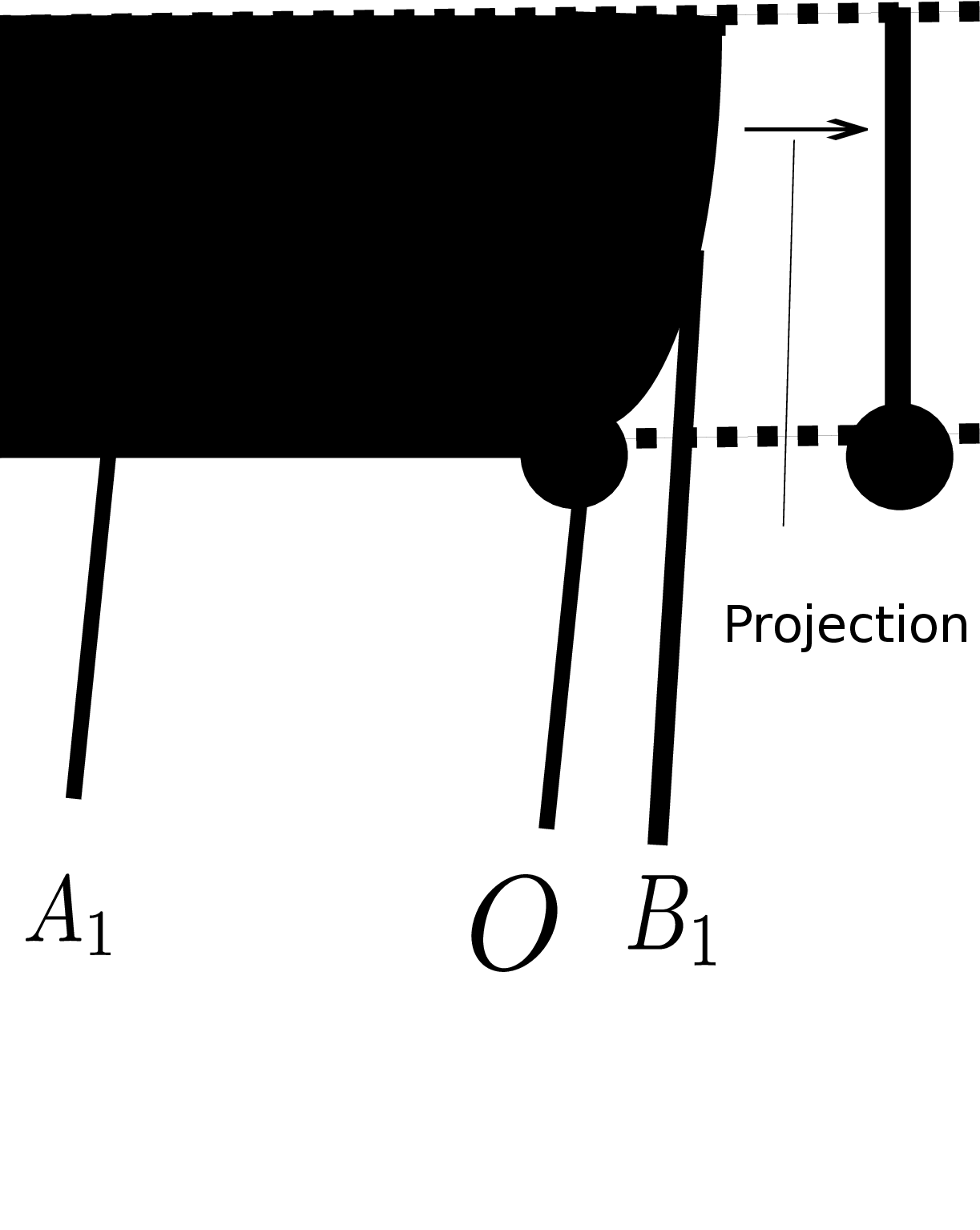}
\caption{The image of a smooth map $\tilde{F_1}$ into the plane and the composition of the map with the projection $p$. The dot is the singular value also in this case.}
\label{fig:8}
\end{figure}

As a result, for each vertex at which the good function has a local extremum, we obtain a local function on an orientable surface satisfying the first, second, third and fifth properties in the statement. This completes STEP 2. \\
 \\
\noindent STEP 3 Completing the construction. \\

Last, we construct functions around remaining parts. However, we can do this in a way similar to the proof of Theorem \ref{thm:2}. This completes the proof.

\end{proof}
\begin{Thm}
\label{thm:5}
Let $n$ be a positive integer.
As Theorem {\rm \ref{thm:3}}, the {\rm (}$n+1${\rm )}-dimensional version of Theorem {\rm \ref{thm:4}} holds where we take the $n$-dimensional unit sphere $S^n$ and the Euclidean space ${\mathbb{R}}^n$ instead of a circle and a line respectively as connected components of preimages of regular values.
\end{Thm}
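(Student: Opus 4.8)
The plan is to repeat the proof of Theorem~\ref{thm:4} essentially word for word, upgrading every $2$-dimensional local model to an $(n+1)$-dimensional one in exactly the way that Remark~\ref{rem:1} and Theorem~\ref{thm:3} upgrade the proof of Theorem~\ref{thm:2}; throughout, a regular fiber that was a circle is to become a copy of $S^n$ and a regular fiber that was a line is to become a copy of ${\mathbb{R}}^n$. For STEP~1, around a vertex at which the good function $h$ has no local extremum, the elementary Morse building block of FIGURE~\ref{fig:1} is replaced by the attachment of $1$-handles and $n$-handles to $F \times [0,1]$ described in Remark~\ref{rem:1}, where $F$ is the disjoint union of $a$ copies of ${\mathbb{R}}^n$ and $b$ copies of $S^n$ and the other boundary manifold is the disjoint union of $a$ copies of ${\mathbb{R}}^n$ and $c$ copies of $S^n$; for a degree $1$ extremal vertex whose edge is labeled $0$ one uses the height function of $D^{n+1}$ from Remark~\ref{rem:1} in place of that of $D^2$.

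The genuinely new ingredients of Theorem~\ref{thm:4} are the fold maps $\tilde{F_0}$, $\tilde{F}$, $\tilde{F^{\prime}}$ and $\tilde{F_1}$ into the plane together with their compositions with the projection $p(x,y):=y$. One checks that each of these is supported near a configuration of \emph{line}-type pieces only, with no circle occurring in its fibers, so that its $(n+1)$-dimensional analog may simply be taken to be the product of its domain with ${\mathbb{R}}^{n-1}$, the new local function being $(p \circ \tilde{F}) \circ \mathrm{pr}_1$ where $\mathrm{pr}_1$ is the projection onto the first factor. Under this operation the line fibers become copies of ${\mathbb{R}}^n$, the portions where $p \circ \tilde{F}$ was Morse--Bott remain Morse--Bott with the critical locus thickened by ${\mathbb{R}}^{n-1}$, the finitely many honest Morse points become Morse--Bott loci of dimension $n-1$, and the finitely many exceptional points still contribute at worst compositions of two Morse functions or of a Morse--Bott function with a Morse function, so that the fourth and fifth properties of the statement hold. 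The planar images in ${\mathbb{R}}^2$ and the schematic figures of the proof of Theorem~\ref{thm:4} are kept verbatim.

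STEP~2 and STEP~3 then carry over directly. Around an extremal vertex of degree greater than $1$ one builds the Remark~\ref{rem:1} model, embeds its image into the plane as $\{(x,y)\mid y=x^2\}$ and composes with $p$, obtaining a local function essentially of the form $\mu^2$ whose regular fibers near the vertex are copies of $S^n$ and ${\mathbb{R}}^n$, one for each incident edge, with $\mu^{-1}(0)$ mapped to the vertex; around a degree $1$ extremal vertex whose edge is labeled $1$ one uses the $(n+1)$-dimensional analog of $\tilde{F_1}$; and the remaining interiors of the edges are filled in by trivial bundles with fiber $S^n$ or ${\mathbb{R}}^n$. Gluing the local models along the $n$-dimensional pieces of their boundaries yields an $(n+1)$-dimensional manifold $M$ carrying $f$, and, exactly as in the proofs of Theorems~\ref{thm:2} and~\ref{thm:4}, one must choose the gluing diffeomorphisms one after another so that $M$ is orientable, which is possible because each local model is orientable and each gluing locus is a disjoint union of copies of ${\mathbb{R}}^n$ and $S^n$.

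I expect the main obstacle to be precisely the task the paper leaves to the reader for Theorem~\ref{thm:3}: the lengthy but conceptually routine verification that each of the rather intricate open-surface models in the proof of Theorem~\ref{thm:4}, once multiplied by ${\mathbb{R}}^{n-1}$ or rebuilt by the handle-theoretic recipe of Remark~\ref{rem:1}, still glues correctly to its neighbors, realizes exactly ${\mathbb{R}}^n$ and $S^n$ as the new regular fibers together with the prescribed connected singular fibers over the vertices, and introduces no singularity outside those permitted by the fourth and fifth properties. Since no new idea beyond Remark~\ref{rem:1} is needed for this, it is natural to record the statement separately as Theorem~\ref{thm:5} rather than to absorb it into Theorem~\ref{thm:4}.
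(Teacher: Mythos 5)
Your outline of the handle-theoretic part (Remark \ref{rem:1} for STEP 1, the height function of a disc and the parabola trick for STEP 2, trivial bundles and careful gluing for STEP 3) matches the intended argument, but your central new step --- replacing each planar fold-map model $\tilde{F_0}$, $\tilde{F}$, $\tilde{F^{\prime}}$, $\tilde{F_1}$ by its product with ${\mathbb{R}}^{n-1}$ and composing with the projection to the first factor --- is not the right higher-dimensional analog, and it breaks the construction exactly where these models have to interact with the rest. In CASE 1-B and CASE 1-C of the proof of Theorem \ref{thm:4} these models are not stand-alone: they are inserted in place of small trivial sub-bundles of the ambient local function. In dimension $n+1$ the ambient regular fibers are copies of $S^n$ and ${\mathbb{R}}^n$, so the excised trivial bundles must have fiber $D^n$ (as the paper makes explicit in the proof of Theorem \ref{thm:6}), and the inserted model must have free boundary an $S^{n-1}$-bundle over the interval of values. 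Your product models have free boundary $(S^0 \times {\mathbb{R}}^{n-1})$-bundles, i.e.\ two copies of ${\mathbb{R}}^{n-1}$ over each level, which cannot replace a $D^n$-bundle; in particular they can never modify a compact fiber component $S^n$, since a closed slab $D^1 \times {\mathbb{R}}^{n-1}$ cannot be excised from $S^n$ --- yet this is exactly what CASE 1-C (and the $(a,b,c)=(0,1,1)$-type situations) requires: the local change must turn a $D^n$ inside an $S^n$-fiber into $S^{n-1} \times [0,1)$, converting that $S^n$ into ${\mathbb{R}}^n$. The paper's recipe is different: keep the same planar images and fold curves, but take a standard $(n-1)$-sphere instead of the two-point set as the regular fiber of $F_0$, $F$, $F^{\prime}$, $F_1$, equivalently replace the parabola-type Morse function transverse to the fold locus by the height function of an $n$-dimensional standard disc; then the fibers of the composed functions become $D^n$, ${\mathbb{R}}^n$ and $S^{n-1} \times [0,1)$-type pieces with $S^{n-1}$ boundaries, and the replacement of trivial $D^n$-bundles goes through verbatim. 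Your claim that ``no circle occurs in the fibers'' ignores that the components being modified are circles (spheres) in the ambient local function.

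There is a second, independent problem with the product trick: it multiplies the finitely many exceptional singular points of the surface models --- the preimages of the points such as $(\pm p_0,0)$ and $O$ where the fold curve passes from horizontal to non-horizontal, at which the function is none of the allowed types (this is precisely why Theorem \ref{thm:4} permits finitely many exceptions) --- by ${\mathbb{R}}^{n-1}$. Your function would therefore have an $(n-1)$-dimensional locus of singular points that are neither Morse, Morse--Bott, nor compositions of the permitted kinds, contradicting the ``except finitely many'' clauses in the fourth and fifth properties; your assertion that these points ``still contribute at worst compositions of two Morse functions'' is unsupported and, for the flat transition models used here, false. In the spun models the exceptional points remain finite in number because the fiber of the map to the plane over each fold-curve point is a single point. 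So the proof should be repaired by adopting the paper's substitution ($S^{n-1}$ regular fibers and height functions of $D^n$ in the models $F_0$, $F$, $F^{\prime}$, $F_1$) rather than taking products with ${\mathbb{R}}^{n-1}$.
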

We can prove this similarly to Theorem \ref{thm:4}. We only present remarks and rigorous proofs are left to readers.
\begin{proof}[Remarks on the proof]
In the discussions of considering $F_0$, $F$, $F^{\prime}$, $F_1$ and their extensions in the proof of Theorem \ref{thm:4}, we take a standard ($n-1$)-dimensional sphere instead of the two-point set as the preimage of each regular value. For a local Morse function, we take a natural height function on an $n$-dimensional standard disk instead of a Morse function on a line whose graph is a parabola.
\end{proof}

\section{A further result.}
\label{sec:4}
\begin{Thm}
\label{thm:6}
Let $n>1$ be an integer.
Let $G$ be a connected graph having at least one edge. Assume that a good function $h$ is given. Assume also that a non-negative integer is assigned to each edge satisfying the following condition by a map $l${\rm :} if an edge $e$ contains a vertex of degree $1$, then $l(e)=0,1,2$.

In this situation there exist an {\rm (}$n+1${\rm )}-dimensional connected and orientable manifold $M$ and a smooth function $f:M \rightarrow \mathbb{R}$ satisfying the following five properties.
\begin{enumerate}
\item The Reeb graph $W_f$ is isomorphic to $G$ and an isomorphism $\phi:W_f \rightarrow G$ exists.
\item If we consider the natural quotient map onto the graph and for each point that is not a vertex and that is in an edge $e$ satisfying $l(e)=q \geq 0$, then the preimage is diffeomorphic to a manifold obtained by removing $q$ standard disks disjointly and smoothly embedded into an $n$-dimensional standard sphere {\rm (}$q=0$ means that the resulting manifold is diffeomorphic to $S^n$ and $q=1$ means that the resulting manifold is diffeomorphic to $\mathbb{R}${\rm )}. 
\item For a point $p \in M$ mapped by the quotient map $q_f$ to a vertex $v$, $f(p)=h(\phi(v))$. 
\item At singular points at which the function $f$ does not have local extrema, except finitely many ones, the local functions are Morse functions or Morse-Bott functions.
\item At singular points at which the function $f$ has local extrema, except finitely many ones, the local functions are Morse functions, Morse-Bott functions, or represented as the compositions of two Morse functions or compositions of Morse-Bott functions with Morse functions.
\end{enumerate}
\end{Thm}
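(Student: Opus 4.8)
The plan is to keep the skeleton of the proofs of Theorems \ref{thm:2}--\ref{thm:5}: construct explicit local smooth functions on pieces of $M$ lying over small regular neighborhoods of the vertices of $G$; over the remaining parts of the edges use trivial smooth bundles whose fibers are the prescribed manifolds obtained from $S^n$ by deleting $a_{\lambda}$ disjoint closed discs; and then glue everything along the $n$-dimensional boundary components, choosing the gluing diffeomorphisms one after another so that $M$ becomes connected and orientable. Once such local pieces are produced, properties (1), (2), (3) are immediate --- the regular neighborhoods of the vertices map homeomorphically, the edge bundles realize (2), and the values $h(\phi(v))$ at the vertices are built into the local models --- and (4), (5) are read off piece by piece. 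The one genuinely new feature, compared with Theorems \ref{thm:4} and \ref{thm:5}, is that a regular fiber $S^n\setminus(a_{\lambda}D^n)$ now has $a_{\lambda}$ ends, and the heart of the matter is which such fibers a single local model can cap off at a vertex of degree $1$.

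At a vertex $v$ of degree at least $2$, whether or not $h$ has a local extremum at $v$, I would argue exactly as in STEP 1 and STEP 2 of the proofs of Theorems \ref{thm:2} and \ref{thm:4}: the handle attachments of STEP 1 of Theorem \ref{thm:2}, raised to dimension $n+1$ by Remark \ref{rem:1}, together with the non-proper fold-map modifications of the cases CASE 1-A--CASE 1-D of Theorem \ref{thm:4}, transported to dimension $n+1$ as in the remarks on the proof of Theorem \ref{thm:5} (the two-point fiber replaced by a standard $S^{n-1}$, the parabola-graph Morse function on an interval replaced by a height function of $D^n$). Since there are at least two incident edges, one has enough room to merge, pair off, or open up ends so as to pass, within a single critical value, from the disjoint union of the incoming fibers to that of the outgoing ones --- or, at an extremal vertex of degree at least $2$, to cap off the disjoint union of the incoming (respectively outgoing) fibers --- while keeping the fiber diffeomorphism type constant along each edge. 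As in Theorem \ref{thm:4}, away from finitely many singular points the local function can be taken to be Morse, Morse--Bott, or of composition type; this is exactly what (4), (5) ask for, and is why no hypothesis on the $a_{\lambda}$ is needed at vertices of degree at least $2$.

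The decisive local models are those at a vertex $v$ of degree $1$; say $h$ has a local maximum at $v$, the minimum case being symmetric, and let $a$ be the integer on the unique incident edge, so the adjacent fiber is $S^n\setminus(aD^n)$. For $a=0$ take $M_v=D^{n+1}$ with $f_v(x)=h(v)-||x||^2$: one Morse singularity, vertex fiber a point, regular fiber $S^n$. For $a=2$ take $M_v=D^n\times\mathbb{R}$ with $f_v(x,t)=h(v)-||x||^2$: a Morse--Bott function whose critical submanifold is the line $\{0\}\times\mathbb{R}$, whose vertex fiber is that line, whose regular fibers are $S^{n-1}\times\mathbb{R}\cong S^n\setminus(2D^n)$, and with $\partial M_v=S^{n-1}\times\mathbb{R}$ diffeomorphic to the fiber of the adjacent edge bundle. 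For $a=1$, where neither a ball cap nor a Morse--Bott cap can have the contractible connected fiber $\mathbb{R}^n$, take instead the ``$\tilde{F_1}$''-model of STEP 2 of the proof of Theorem \ref{thm:4}, with the two-point fiber replaced by $S^{n-1}$: a fold map from an open $(n+1)$-manifold post-composed with the projection, capping off $\mathbb{R}^n=S^n\setminus D^n$ using fold singularities (hence Morse--Bott away from finitely many points) and composition-type singularities. A short inspection of which manifolds $S^n\setminus(aD^n)$ arise as regular fibers of such a local cap --- essentially, that no permitted local model at an extremum produces a fiber with three or more ends --- explains why the hypothesis restricts to $a\in\{0,1,2\}$ on any edge meeting a vertex of degree $1$.

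Finally I would assemble the pieces along their $n$-dimensional boundary manifolds. The main obstacle here is the bookkeeping that already appears in STEP 3 of the proofs of Theorems \ref{thm:2} and \ref{thm:4}, now with a wider variety of gluing regions $S^n\setminus(a_{\lambda}D^n)$: the gluing diffeomorphisms must be chosen in a consistent order so that the resulting $(n+1)$-manifold $M$ is connected and orientable, and one must check along the way that no gluing creates an unintended singular fiber, so that $W_f$ is isomorphic to $G$ exactly and the preimage of each vertex $v$ lies in the level set $f=h(\phi(v))$. Collecting the singularity-type data from the local models then gives (4) and (5). I expect the orientable-gluing step, and the verification of the $a=2$ degree-$1$ model, to require the most care; the combinatorics at vertices of degree at least $2$ should be a routine adaptation of Theorems \ref{thm:4} and \ref{thm:5}.
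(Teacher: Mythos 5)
Your degree-$1$ models coincide with the paper's: the ball cap for $a_{\lambda}=0$, the $\tilde{F_1}$-type cap for $a_{\lambda}=1$, and, for the genuinely new case $a_{\lambda}=2$, the product of a height function on the $n$-dimensional unit disc with the identity on a line (your $D^n\times\mathbb{R}$ with $f=h(v)-||x||^2$) composed with the projection; the gluing step also agrees with the paper's STEP 3. However, there is a genuine gap where you declare the vertices of degree at least $2$ to be ``a routine adaptation of Theorems \ref{thm:4} and \ref{thm:5}''. Those constructions only produce regular fibers diffeomorphic to $S^n$ or ${\mathbb{R}}^n$ (zero or one end), whereas in Theorem \ref{thm:6} an edge incident to such a vertex may carry an arbitrary label $a_{\lambda}$, so the local model must exhibit fibers diffeomorphic to $S^n$ with $a_{\lambda}$ discs removed, $a_{\lambda}$ possibly $3,4,\dots$. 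Asserting that ``one has enough room to merge, pair off, or open up ends'' names no mechanism, and no direct transcription of the handle attachments of Remark \ref{rem:1} together with CASE 1-A--1-D, used as they are used in Theorems \ref{thm:4} and \ref{thm:5}, yields such multi-ended fibers.

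The missing idea, which is the substantive new step of the paper's proof (its STEP 1), is a two-stage insertion: first construct, around the vertex, a local Morse function all of whose regular fibers are copies of $S^n$ (the Michalak-type construction already exploited in \cite{kitazawa2}); then, inside this local function and separately along each incident edge $e_{\lambda}$, choose $a_{\lambda}$ disjoint trivial $D^n$-bundles over the corresponding interval in the image and replace each of them by the $(n+1)$-dimensional version of the CASE 1-C function $\tilde{F^{\prime}}$, which changes a copy of $D^n$ in the fiber into $S^{n-1}\times[0,1)$ and hence adds exactly one puncture; performing this $a_{\lambda}$ times per edge realizes $S^n$ minus $a_{\lambda}$ discs along that edge, and an extremal vertex of degree greater than $1$ is then handled by folding this local function through a parabola as in STEP 2 of Theorem \ref{thm:2}. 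Without this layered replacement (or an equivalent device) your argument at non-extremal vertices and at extremal vertices of degree at least $2$ does not go through; your correct observation that the restriction $a_{\lambda}\in\{0,1,2\}$ is only needed at degree-$1$ vertices is a consequence of this construction, not a substitute for it.
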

\begin{proof}
\noindent STEP 1 Construction around a vertex at which $h$ does not have a local extremum. \\

We first construct a local Morse function as in Theorem \ref{thm:2} so that for each edge containing the vertex, the second property of the five properties holds where $0$ is assigned to each edge. This is also a main ingredient of a main theorem of \cite{michalak} and explained in \cite{kitazawa2}.
For the vertex, we may assume that the third property of the five properties holds. We can also obtain the function so that locally the Reeb graph is isomorphic to $G$: as is explained in the proof of Theorem \ref{thm:2}, there exists a homeomorphism from the Reeb space onto a small regular neighborhood of the vertex in the given graph mapping a vertex (the point representing the preimage containing some singular points) to a vertex of the given graph.

This argument is completed by locally using functions (on ($n+1$)-dimensional manifolds) in CASE 1-C in the proof of Theorem \ref{thm:4} or \ref{thm:5} instead of trivial smooth bundles whose fiber is a standard $n$-dimensional disk over the image, which is an interval, suitably. This function changes manifolds diffeomorphic to $D^{n}$ in preimages into ones diffeomorphic to $S^{n-1} \times [0,1)$ when the value of the function increases or decreases. This change increases the numbers of $n$-dimensional standard disks disjointly and smoothly embedded into an $n$-dimensional standard sphere to remove to obtain manifolds of the connected components of preimages of regular values by one. For each edge $e$ containing the vertex, we choose $l(e)$ trivial smooth bundles whose fiber is a standard $n$-dimensional disk over the interval suitably. We can take the (total spaces of) bundles disjointly. We replace them by suitable local functions in a suitable way.

Thus we can obtain a desired local function. At each singular point of the obtained function, except finitely many ones, this is a Morse function or a Morse-Bott function. \\
 \\
\noindent STEP 2 Construction around a vertex at which $h$ has a local extremum. \\

If the vertex is of degree greater than $1$, then we can apply an argument similar to that of the proof of Theorem \ref{thm:2} (Theorem \ref{thm:4}) using the local function.

Consider a case where the vertex is of degree $1$. The case where $l(e)=2$ is assigned to the edge $e$ containing the vertex is the only one new case to consider. We consider a natural height function of an $n$-dimensional standard disk and the product of this and the identity map on a line so that the image is $\mathbb{R} \times [0,1]$ and that the singular value set is $\mathbb{R} \times \{0\}$ ($\mathbb{R} \times \{1\}$). 
Last we compose the projection $p(x,y):=y$ and scale the resulting function. Thus we obtain a desired local function.\\
 \\
This with arguments similar to ones in the presented proofs completes the proof.
\end{proof}
\section{Acknowledgement.}
\label{sec:5}
The author is a member of the project JSPS KAKENHI Grant Number JP17H06128 "Innovative research of geometric topology and singularities of differentiable mappings" (Principal Investigator: Osamu Saeki) and this work is supported by this project. The author would like to thank Osamu Saeki, colleagues supporting this project and Irina Gelbukh for motivating the author to produce the present study through discussions on \cite{kitazawa2}. Last, the author would like to thank anonymous referees.

We also declare that data supporting our present study essentially are all in the present paper.

\end{document}